\renewcommand{\leq}{\leqslant}
\date{}
\newtheorem{theorem}{Theorem}[section]
\newtheorem{lemma}{Lemma}[section]
\newtheorem{example}{Example}[section]
\numberwithin{equation}{section}
\newcommand{\zd}{\,\mathrm{d}}
\newcommand{\diff}{\triangledown_{\tau}}
\newcommand{\abs}[1]{\left|#1\right|}
\newcommand{\bra}[1]{\left(#1\right)}
\newcommand{\brab}[1]{\big(#1\big)}
\newcommand{\braB}[1]{\Big(#1\Big)}
\newcommand{\myinner}[1]{\left\langle#1\right\rangle}
\newcommand{\myinnerb}[1]{\big\langle#1\big\rangle}
\newcommand{\mynorm}[1]{\left\|#1\right\|}
\newcommand{\mynormb}[1]{\big\|#1\big\|}
\newcommand{\zhur}[1]{{\color{black}#1}}
\begin{document}
\title{Energy stable and $L^2$ norm convergent BDF3 scheme for the Swift–Hohenberg equation}
\author{Xuan Zhao\thanks{Corresponding author. School of
Mathematics, Southeast University, Nanjing 210096, P. R. China (xuanzhao11@seu.edu.cn).}
\quad Ran Yang\thanks{School of Mathematics, Southeast University, Nanjing 210096, P. R. China (220191527@seu.edu.cn).}
\quad Zhongqin Xue \thanks{School of
Mathematics, Southeast University, Nanjing 210096, P. R. China. (zqxue@seu.edu.cn).}
\quad Hong Sun\thanks{School of Mathematics, Southeast University, Nanjing 210096; Department of Mathematics and Physics, 
Nanjing Institute of Technology, Nanjing 211167, P. R. China (sunhongzhal@126.com).}}

\date{}
\maketitle
\normalsize
\begin{abstract}

A fully discrete implicit scheme is proposed for the Swift-Hohenberg model, combining the third-order backward differentiation formula (BDF3) for the time discretization and the second-order finite difference scheme for the space discretization. Applying the Brouwer fixed-point theorem and the positive definiteness of the convolution coefficients of BDF3, the presented numerical algorithm is proved to be uniquely solvable and unconditionally energy stable, further, the numerical solution is shown to be bounded in the maximum norm.  The proposed scheme is rigorously proved to be convergent  in $L^2$ norm  by the discrete orthogonal convolution (DOC) kernel, which transfer the four-level-solution form into the three-level-gradient form for the approximation of the temporal derivative.  Consequently, the error estimate for the numerical solution is established by utilization of  the discrete Gronwall inequality. Numerical examples in 2D and 3D cases are provided to support the theoretical results.

\noindent{\emph{Keywords}:}\;\; Swift-Hohenberg equation; BDF3 scheme;  Energy stability; Convergence; \\
\end{abstract}

\section{Introduction}\setcounter{equation}{0}
In this work, a fully discrete implicit scheme is proposed for the following Swift-Hohenberg equation with the periodic boundary conditions 
\begin{align}\label{def:the SH equation}
	u_t=-(1+\Delta)^2u-f(u),
\end{align}
where $f(u)=u^3-\mathrm{g}u^2-\epsilon u,$ $\epsilon$ and $\mathrm{g}$ are positive contants with physical significance. A closed two-dimensional domain $\Omega \subset \mathbb{R}^2$ is considered for the theoretical part.

The Swift-Hohenberg equation is a nonlinear fourth-order partial differential equation, which describes the effects of thermal fluctuations on the Rayleigh-B$\acute{\rm e}$nard instability\cite{Swift_PRA1977}. As one of the paradigms of nonlinear dynamical system, the Swift-Hohenberg equation has been widely applied in physics\cite{Li_SA2020}, materials science\cite{Wise_SIAMJNA2009}, dynamics of ecological systems\cite{Tlidi_PD2020}, and many other nonlinear fields\cite{Stefanovic_PRE2009,Rosa_PA2000,Hutt_PD2008}.

The energy of the equation\eqref{def:the SH equation} is defined by the Lyapunov energy functional
\begin{align}\label{def:continuous energy}
	E(u):=\int_{\Omega}\braB{\frac{1}{2}u(1+\Delta)^2 u+\frac{1}{4}u^4-\frac{\mathrm{g}}{3}u^3-\frac{\epsilon}{2} u^2} \zd\mathbf{x},
\end{align}
and is nonincreasing in time
\begin{align}\label{def: continuous energy dissipation law}
	\frac{d}{dt}E(u)=-\int_{\Omega}\left( \frac{du}{dt}\right)^2\zd\mathbf{x} \le 0.
\end{align}

Bifurcation analysis\cite{Choi_DCDSSB2017} and pattern selection\cite{Peletier_PD2004} of the equilibrium solutions for the Swift-Hohenberg equation are theoretically analyzed. 
Shi and Han\cite{Shi_DCDSSS2020} obtained a reversible homoclinic solution approaching to a periodic solution of the Swift-Hohenberg equation, and the existence of nontrivial periodic solutions was proved by Lai and Zhang\cite{Lai_AML2020}. 
In \cite{Gao_CPAA2020}, the effects of rapid oscillations of the forcing term on the long-time behaviour of the solutions are studied. 
We also refer the reader to the recent article \cite{Kostin_MV2018} on nonstationary case and the references therein.

Recently, the efficient numerical methods for the Swift-Hohenberg equation are considered since the analytically solutions cannot be generally obtained. A fully discrete discontinuous Galerkin scheme is proposed in \cite{Liu_JSC2019}, where the time step as well as the mesh size are irrespective. The scheme is proved uniquely solvable and unconditionally energy stable. 
A second-order energy-stable time-integration method that suppresses numerical instabilities is presented by Sarmiento et al.\cite{Sarmiento_JCAM2018} and a detailed proof of the unconditional energy stability is provided.
Liu\cite{Liu_NA2020} considered two linear, second-order and unconditionally energy stable schemes by linear invariant energy quadratization and modified scalar auxiliary variable approaches. 
Dehghan et al.\cite{Dehghan_IJNMHFF2019} combined the proper orthogonal decomposition approach and the local discontinuous Galerkin technique, and discussed the energy stability. Sun et al.\cite{sunBDF2} proposed an adaptive BDF2 scheme for the Swift-Hohenberg equation, and proved the energy stability and the convergence. Besides, readers can refer to the references \cite{SuCAM2019,LEE20171885,DEHGHAN201749,GOMEZ20124930,Wang2011,Zhao2013,LEE2020112815,Wang2020,Zhou2019} for some related numerical methods. However, in most existing works, the convergence of the numerical algorithms for Swift-Hohenberg equation is rarely considered. 

As is known that the Swift-Hohenberg equation takes a long-time to reach the steady state, the key challenge in design the numerical methods is the improvement for the computational efficiency of the numerical schemes. High order time-stepping strategies and adaptive time-stepping algorithms are always suitable choice for developing the efficient numerical schemes. In the literature, we name the following methods in simplicity for the discretization of Swift-Hohenberg equation in temporal direction

\begin{itemize}
	\item[1)]  Crank-Nicolson type of scheme (second-order)\cite{Christov_MCM2002}
		\begin{align*}
			\dfrac{u^{k+1}-u^k}{\tau}&=D\left( -\frac{\partial^4}{\partial x^4}-\frac{\partial^4}{\partial y^4}-2k^2\frac{\partial^2}{\partial x^2}-2k^2\frac{\partial^2}{\partial y^2}-2\frac{\partial^4}{\partial x^2\partial y^2}-k^4\right)\dfrac{u^{k+1}+u^k}{2}\\
			&-\dfrac{U(u^{n+1})-U(u^{n})}{u^{n+1}-u^n}
		\end{align*}	
			where $U(u)$ stands for the potential of the nonlinear force upon the system. 
	\item[2)] Convex splitting method (first-/second-order) \cite{Lee_CMAME2019}
	\begin{itemize}
		\item[-]The first order convex splitting
		\begin{align*}
			\dfrac{u^{k+1}-u^k}{\tau}=-(u^{k+1})^3+\mathrm{g}(u^{k+1})^2-(A+(1+\Delta)^2)u^{k+1}+(A+\epsilon)u^k,
		\end{align*}
		\item[-]The second order convex splitting
		\begin{align*}
			\dfrac{u^{k+1}-u^k}{\tau}=-\left( \chi(u^{k+1},u^{k})+(A+(1+\Delta)^2)\left(\dfrac{u^{k+1}+u^k}{2}\right)-(A+\epsilon)\dfrac{3u^{k}-u^{n-1}}{2}\right),
		\end{align*}
		where $\chi(u,v)=\dfrac{u^2+v^2}{2}\dfrac{u+v}{2}-\mathrm{g}\dfrac{u^2+uv+v^2}{3}$.
	\end{itemize}
		\item[3)]Semi-implicit Euler scheme (first-order) \cite{Zhang_AAMM2018}
			\begin{align*}
			\dfrac{u^{k+1}-u^k}{\tau}+A(u^{k+1}-u^k)=2\Delta u^k-\Delta^2u^{k+1}-(1-\epsilon)u^{k+1}-(u^k)^3,
			\end{align*}where $A(u^{k+1}-u^k)$ is an extra artificial term to preserve the energy stability.		
			   \item[4)] Multiple SAV scheme based on the Crank-Nicolson method (second-order)\cite{Liu_NA2020}
		\begin{align*}
		\dfrac{u^{k+1}-u^k}{\tau}&=M\Delta \mu^{k+\frac{1}{2}},\\
		\mu^{k+\frac{1}{2}}&=(1+\Delta)^2\left(\dfrac{u^{k+1}+u^{k}}{2} \right)+\dfrac{r^{k+1}+r^k}{2\sqrt{\tilde{E}(\tilde{u}^{k+1/2})}}U(\tilde{u}^{k+1/2}) \\
		&-\dfrac{m^{k+1}+m^k}{2\sqrt{E_0(\tilde{u}^{k+1/2})}}V(\tilde{u}^{k+1/2}),\\
		\dfrac{r^{k+1}-r^k}{\tau}&=\dfrac{1}{2\sqrt{\tilde{E}(\tilde{u}^{k+1/2})}}\int_{\Omega}U(\tilde{u}^{k+1/2})\dfrac{u^{k+1}-u^k}{\tau}\zd\mathbf{x},\\
		\dfrac{m^{k+1}-m^k}{\tau}&=\dfrac{1}{2\sqrt{E_0(\tilde{u}^{k+1/2})}}\int_{\Omega}V(\tilde{u}^{k+1/2})\dfrac{u^{k+1}-u^k}{\tau}\zd\mathbf{x},\\				
		\end{align*}
	    where $E_0(u)=\int_{\Omega}\frac{\mathrm{g}}{6}u^4+\frac{\mathrm{g}}{6}u^2+\frac{\epsilon}{6}u^4>0$, $\tilde{E}(u)=\int_{\Omega}\frac{1}{4}u^4-\frac{\mathrm{g}}{3}u^3+\frac{\mathrm{g}}{6}u^4+\frac{\mathrm{g}}{6}u^2>0$, $U(u)=\frac{\delta\tilde{E}}{\delta u}$, $V(u)=\frac{\delta E_0}{\delta u}$, $r(t)=\sqrt{\tilde{E}(u)}$ and $m(t)=\sqrt{E_0(u)}$ are two scalar auxiliary variables, $\tilde{u}^{k+1/2}$ is any explicit $O(\tau^2)$ approximation for $u(t^{k+1/2})$. 	
		\item[5)] Invariant Energy Quadratization(IEQ) method (first-/second-order)\cite{Liu_JSC2019}
		\begin{align*}
			\left(\dfrac{u_h^{k+1}-u_h^k}{\tau},\phi \right)&=-A(\phi,q_h^{k+1})-(H(u_h^k)U^{k+1},\phi),\\
			 (q_h^k,\psi)&=A(u_h^k,\psi),~~~~~~~\text{for}~\forall \phi,\psi~~\text{in the space of piecewise polynomials},	
		\end{align*}
		where $H(u)=\dfrac{\Phi'(u)}{\sqrt{\Phi(u)+B}}$ for the constant $B>0,$ $\Phi=\frac{1}{4}u^4-\frac{\mathrm{g}}{3}u^3-\frac{\epsilon}{2}u^2,$ $U=\sqrt{\Phi(u)+B}$, besides, $A(\cdot,\cdot)$ is a biliner operator corresponding to the operator $-(\Delta+\frac{a}{2})$. Once the $q_h^{k+1}$, $U^{k+1}$ and $H(u_h^k)$ are replaced by $(q_h^{k+1}+q_h^{k})/2$, $(U^{k+1}+U_h^k)/2$ and $H(u_h^{k,*})$ respectively, with $u_h^{k,*}=\frac{3}{2}u_h^k-\frac{1}{2}u_h^{k-1}$, a second order discretization in time is obtained.
	
		\item[6)] Adaptive BDF2 scheme (second-order)\cite{sunBDF2}
			\begin{align*}
				\dfrac{1+2r_k}{\tau_k(1+r_k)}(u^{k}-u^{k-1})-\dfrac{r_k^2}{\tau_k(1+r_k)}(u^{k-1}-u^{k-2})=-(1+\Delta)^2u^k-f(u^k)	
			\end{align*}
		where $\tau_k=t_k-t_{k-1}$ is the time step and $r_k=\tau_k-\tau_{k-1}$. The adaptive time step allows rather larger time step than the uniform time discretizations.
\end{itemize}

As is seen that most of the existing approaches proposed in the literatures keeps first-/second-order accurate in time direction. Whereas, we consider a third order time-integration approach, namely the BDF3 formula\cite{Hairer_HSV1996}, to solve the Swift-Hohenberg equation for the time discretization with central difference approximation in space. This third-order accurate method has very satisfactory stability properties\cite{Kim_JCP2020}, and was found to be the most efficient in terms of the computational cost for a given accuracy level compared to the lower-order schemes in \cite{Zhang_AMM2013}.
However, it is known that the $L^2$-norm stability and convergence for the BDF3 formula are difficult to obtain as the scheme is not A-stable. Relying on the equivalence between A-stability and G-stability\cite{Dahlquist_BIT1978}, some useful tools for the numerical analysis of the BDF3 scheme were proposed\cite{Liu_SIAMJNA2013, Nevanlinna_NFAO1981}. Furthermore, Liao et al.\cite{Liao_2021} achieved a novel yet straightforward discrete energy analysis for the BDF3 schemes using the discrete orthogonal convolution (DOC) kernel technique, and the $L^2$-norm stability and convergence of the BDF3 scheme has already been rigorously proved for the linear reaction-diffusion equation.

We focus on developing a fully discrete implicit scheme for the Swift-Hohenberg equation. By virtue of the DOC kernel, the optimal $L^2$ error estimate is carried out for the proposed scheme, and the scheme is rigorously proved to be third order in time. To the best of our knowledge, this is the first time that the $L^2$ norm convergence of BDF3 method is obtained for the Swift-Hohenberg equation. 
Besides, by applying the Brouwer fixed-point theorem and other analytical methods, the unique solvability and the energy stability of the proposed scheme are also provided.

The remainder of the paper is organized as follows. In Section 2, with the introduction of some notations, the construction of the BDF3 difference scheme for the Swift-Hohenberg equation \eqref{def:the SH equation} is shown in detail. Section 3 presents the results that the proposed scheme is uniquely solvable and maintains the properties of global energy stability. In Section 4, using the DOC kernel proposed in \cite{Liao_MA2020}, the $L^2$-norm convergence of the BDF3 scheme is rigorously proved. Numerical tests of 2D and 3D are performed in Section 5 to verify the accuracy, the stability and the efficiency of the proposed scheme. Some concluding remarks are given in Section 6.

\section{Construction of the BDF3 difference scheme}
\setcounter{equation}{0}
In this section, we set up a fully discrete scheme for the Swift-Hohenberg equation described as follows
\begin{align}\label{def:the SH equation1}
	&u_t=-(1+\Delta)^2u-f(u),					&~~(x,y)\in \Omega,  ~0<t\le T,\\
	&u(0,y,t)=u(1,y,t),~u(x,0,t)=u(x,1,t), 	    &~~(x,y) \in \Omega, ~0<t\le T,
	\label{def:the SH equation2}\\
	&u(x,y,0)=\varphi_0(x,y), 					&~~(x,y) \in \overline{\Omega},
	\label{def:the SH equation3}
\end{align}
where $\Omega=(0,1)^2$, $\overline{\Omega}=[0,1]^2$, the nonlinear term $f(u)=u^3-\mathrm{g}u^2-\epsilon u$ with two positive physical parameters $\mathrm{g}$ and $\epsilon$, and $\varphi_0$ is a known smooth function.

Now we consider the difference schemes of \eqref{def:the SH equation1}-\eqref{def:the SH equation3}. For the discretization of time direction, the BDF3 formula is applied. Take a positive integer N, and denote $\tau=\frac{T}{N}$, $t_k=\tau k$ for $0\le k\le N$, then the well-known BDF3 formula can be described as \\
\begin{align}\label{def:BDF3}
D_3v^k:=\frac{11}{6\tau}\diff v^k - \frac{7}{6\tau}\diff v^{k-1} +\frac{1}{3\tau}\diff v^{k-2}, \qquad\text{ for $k \ge 3$},
\end{align}
where $\diff v^k:=v^k-v^{k-1}$ for any sequence $v^k=v(t_k)$.
Referring to \cite{Liao_MA2020}, the formula \eqref{def:BDF3} can be rewritten as a discrete convolution summation,
\begin{align}\label{CBDF3}
	D_3v^k:=\sum_{j=1}^kb_{k-j}\diff v^j,
	\quad \text{ $k\ge3$},
\end{align}
where the convolution kernels $b_{k-j}$ are defined by 
\begin{align}\label{b_{n-k}}
	b_{0}:=\frac{11}{6\tau},\quad
	b_{1}:=-\frac{7}{6\tau}\quad \text{and} \quad
	b_{2}:=\frac{1}{3\tau},\quad \text{togeter with} \quad
	b_{j}:=0,\quad \mathrm{for}\quad 3\le j\le k-1.
\end{align}

Since the BDF3 formula requires three previous levels of unknowns, extra schemes should be used to compute $u^1$ and $u^2$. Taylor expansion leads to 
$$\frac{1}{2}[u_t(t_1)+u_t(t_0)]=\frac{1}{\tau}[u(t_1)-u(t_0)]+O(\tau^2),$$
thus, a second-order scheme can be obtained
\begin{align}\label{def:u^1}
	D_3u^1=\frac{2}{\tau}\diff u^1,
\end{align}
where $u^0:=u(t_0)+\frac{\tau}{2}u_t(t_0)$. 
Then, the BDF2 formula is used to calculate $u^2$ as 
\begin{align}\label{def:u^2}
	D_3u^2=-\frac{1}{2\tau}\diff u^1+\frac{3}{2\tau}\diff u^2.
\end{align}

For the spatial direction discretization, let $M$ be a positive integer, the spatial lengths $h_x=h_y=h:=L/M$, the grid points $x_i=ih$, $y_j=jh$ and ${\mathrm{x}_h}=(x_i, y_j).$
The discrete spatial grid
$\Omega_h:=\big\{{\mathrm{x}_h}=(x_i, y_j)~|~1\le i, j\le M-1\big\}$
and $\bar{\Omega}_h:=\big\{{\mathrm{x}_h}=(x_i, y_j)~|~0\le i, j\le M\big\}.$
Consider the periodic function space
$$\mathcal{V}_h:=\big\{v_h=v({\mathrm{x}_h})~|~{\mathrm{x}_h}\in \bar{\Omega}_h
\;\text{and $v_h$ is periodic in each direction}\big\}.$$
Given a grid function $v\in \mathcal{V}_h,$ introduce the following notations
$\delta_xv_{i+\frac{1}{2},j}=(v_{i+1,j}-v_{ij})/h$, and $\delta^2_xv_{ij}=(\delta_xv_{i+\frac{1}{2},j}-\delta_xv_{i-\frac{1}{2},j})/h.$ The discrete notations $\delta_yv_{i,j+\frac{1}{2}}$ and $\delta^2_yv_{ij}$ can be defined similarly. Also, the discrete Laplacian operator $\Delta_hv_{ij}=\delta^2_xv_{ij}+\delta^2_yv_{ij}$ and the discrete gradient vector $\nabla_hv_{ij}=(\delta_xv_{ij},~\delta_yv_{ij})^T$.

Considering Eq. \eqref{def:the SH equation1} at the grid point ($\mathrm{x}_h, t_n$), we have
\begin{align*}
u_t(\mathrm{x}_h, t_n)+(1+\Delta)^2u(\mathrm{x}_h, t_n)+f(u(\mathrm{x}_h, t_n))=0.
\end{align*}
It follows from \eqref{CBDF3}-\eqref{def:u^2} for the time discretization and the difference formula for the space that the fully discrete scheme is obtained in the following
\begin{align}\label{eq: fully BDF3 implicit scheme1}
   D_3u_h^n+(1+\Delta_h)^2u_h^n +f(u_h^n)=0\quad\text{for $\mathrm{x}_h\in\Omega_h$, $1\le n\le N,$}
\end{align}
subjected to the periodic boundary conditions and the following initial condition
\begin{align}\label{eq: BDF3 scheme initial data}
   u_h^0=\varphi_0(\mathrm{x}_h)+\frac{\tau}{2}\varphi_1(\mathrm{x}_h) \quad\text{for $\mathrm{x}_h\in\bar{\Omega}_h$},
\end{align}
where $\varphi_1:=f(\varphi_0)+(1+\Delta_h)^2\varphi_0$ for the smooth data $\varphi_0\in H^4(\Omega).$

For any grid functions $v,w\in\mathcal{V}_h,$ we define the inner product
$\myinner{v,w}:=h^2\sum\limits_{{\mathrm{x}_h}\in\Omega_h}v_h w_h,$ the associated $L^2$ norm
$\mynorm{v}_{l^2}:=\sqrt{\myinner{v,v}}$ \zhur{and the discrete} $L^q$ norm
$$\mynorm{v}_{q}:=\sqrt[q]{h^2\sum_{{\mathrm{x}_h}\in\Omega_h}\abs{v_h}^q}\quad\text{for $v\in \mathcal{V}_h$}.$$
To simplify the notation, we write $\| v\|:=\|v\|_{l^2}$ and the discrete $L^{\infty}$ norm $\mynorm{v}_\infty:=\max\limits_{{\mathrm{x}_h}\in\Omega_h}|v_h|$ \zhur{in the following analysis}. The discrete Green's formula with periodic boundary conditions yields
$\myinner{-\Delta_h v,w}=\myinner{\nabla_hv,\nabla_hw}$ and $\myinner{\Delta_h^2v,w}=\myinner{\Delta_hv,\Delta_hw}.$

\section{Solvability and the energy dissipation law}
\setcounter{equation}{0}
In this section, we demonstrate the unique solvability of the BDF3 implicit scheme \eqref{eq: fully BDF3 implicit scheme1} based on the Brouwer fixed-point theorem, and then, using the positive definiteness of the convolution coefficients, the energy stability is proved in detail, which further deduces the boundedness of the numerical solution in $L^\infty$ norm.

\subsection{Unique solvability}

For simplicity, we introduce notations that $b_0^{(1)}=\frac{2}{\tau}, b_0^{(2)}=\frac{3}{2\tau}$ and $b_0^{(n)}=b_0$, for any fixed index $n$, define the map $\Pi_n:\mathcal{V}_{h}\rightarrow \mathcal{V}_{h}$ as follows
\begin{align}\label{eq: nonlinear map}
	\Pi_n(w_h)&:=b_0^{(n)}w_h-g_h^{n-1}+(1+\Delta_h)^2w_h+f(w_h),~~\mathrm{x}_h\in\bar{\Omega}_h, ~~\text{for}~~n\ge1,
\end{align}
where $g_h^{n-1}=b_0u_h^{n-1}+b_1\diff u_h^{n-1}-b_2\diff u_h^{n-2} \;\text{for}\; n\ge 3, g_h^{1}=\frac{3}{2\tau}u_h^{1}+\frac{1}{2\tau}\diff u_h^{1}$, and $g_h^0=\frac{2}{\tau}\varphi_0+\varphi_1$.

It is easily seen that the equation $\Pi_n(u^n)=0$ is equivalent to the proposed nonlinear implicit scheme \eqref{eq: fully BDF3 implicit scheme1}. Thus, the solvability of BDF3 scheme \eqref{eq: fully BDF3 implicit scheme1} can be verified via the equation $\Pi_n(w_h)=0,~n\ge 1$ in the following theorem.

\begin{theorem} Suppose the time step $\tau$ holds the condition that $\tau<\frac{3}{2(\mathrm{g}^2+\epsilon)}$, the difference scheme (\ref{eq: fully BDF3 implicit scheme1}) is uniquely solvable.
\end{theorem}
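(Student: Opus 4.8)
The plan is to solve, for each time level $n\ge1$, the single nonlinear equation $\Pi_n(w_h)=0$ in the finite-dimensional space $\mathcal{V}_h$, proceeding by induction on $n$: once $u_h^0,\dots,u_h^{n-1}$ are available the grid function $g_h^{n-1}$ appearing in \eqref{eq: nonlinear map} is a fixed element of $\mathcal{V}_h$, so the three startup cases $n=1,2$ and the BDF3 steps $n\ge3$ are all handled uniformly through the notation $b_0^{(n)}$ and $g_h^{n-1}$. Continuity of $\Pi_n$ on $\mathcal{V}_h$ is immediate, so the work is to produce one zero (existence) and then to show it is the only one (uniqueness).

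For existence I would invoke the standard corollary of the Brouwer fixed-point theorem: a continuous map $\Pi_n:\mathcal{V}_h\to\mathcal{V}_h$ has a zero inside the ball $\{\|w_h\|\le R\}$ whenever $\langle\Pi_n(w_h),w_h\rangle>0$ on the sphere $\|w_h\|=R$. Expanding, $\langle\Pi_n(w_h),w_h\rangle=b_0^{(n)}\|w_h\|^2-\langle g_h^{n-1},w_h\rangle+\langle(1+\Delta_h)^2w_h,w_h\rangle+\langle f(w_h),w_h\rangle$. The discrete Green's formula gives $\langle(1+\Delta_h)^2w_h,w_h\rangle=\|(1+\Delta_h)w_h\|^2\ge0$; Cauchy--Schwarz gives $\langle g_h^{n-1},w_h\rangle\le\|g_h^{n-1}\|\,\|w_h\|$; and a Young inequality $\mathrm{g}|w_h|^3\le\tfrac12w_h^4+\tfrac{\mathrm{g}^2}{2}w_h^2$ applied to the cubic part of $\langle f(w_h),w_h\rangle=\|w_h\|_4^4-\mathrm{g}h^2\sum w_h^3-\epsilon\|w_h\|^2$ yields $\langle f(w_h),w_h\rangle\ge\tfrac12\|w_h\|_4^4-(\tfrac{\mathrm{g}^2}{2}+\epsilon)\|w_h\|^2$. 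Discarding the nonnegative terms $\tfrac12\|w_h\|_4^4$ and $\|(1+\Delta_h)w_h\|^2$ leaves $\langle\Pi_n(w_h),w_h\rangle\ge(b_0^{(n)}-\tfrac{\mathrm{g}^2}{2}-\epsilon)\|w_h\|^2-\|g_h^{n-1}\|\,\|w_h\|$, which is strictly positive once $\|w_h\|=R$ is large enough, provided $b_0^{(n)}>\tfrac{\mathrm{g}^2}{2}+\epsilon$; since $b_0^{(n)}\ge\min\{\tfrac2\tau,\tfrac3{2\tau},\tfrac{11}{6\tau}\}=\tfrac3{2\tau}$, this is implied (with room to spare) by $\tau<\tfrac3{2(\mathrm{g}^2+\epsilon)}$.

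For uniqueness I would suppose $w_h$ and $\tilde w_h$ both satisfy $\Pi_n(\cdot)=0$, subtract the equations, and pair with $w_h-\tilde w_h$. The linear terms contribute $b_0^{(n)}\|w_h-\tilde w_h\|^2+\|(1+\Delta_h)(w_h-\tilde w_h)\|^2$, the second being nonnegative. For the nonlinear term I would use the pointwise identity $f(a)-f(b)=(a-b)\big[(a^2+ab+b^2)-\mathrm{g}(a+b)-\epsilon\big]$ together with $\mathrm{g}(a+b)\le\tfrac{a^2+b^2}{2}+\mathrm{g}^2$, giving $(a^2+ab+b^2)-\mathrm{g}(a+b)-\epsilon\ge\tfrac{(a+b)^2}{2}-\mathrm{g}^2-\epsilon\ge-(\mathrm{g}^2+\epsilon)$, hence $\langle f(w_h)-f(\tilde w_h),w_h-\tilde w_h\rangle\ge-(\mathrm{g}^2+\epsilon)\|w_h-\tilde w_h\|^2$. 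Combining, $0\ge(b_0^{(n)}-\mathrm{g}^2-\epsilon)\|w_h-\tilde w_h\|^2$, and since $b_0^{(n)}\ge\tfrac3{2\tau}>\mathrm{g}^2+\epsilon$ under the hypothesis, we conclude $w_h=\tilde w_h$, closing the induction.

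The main delicacy is calibrating the coercivity constant: one must use the crude Young splitting that produces exactly $-(\mathrm{g}^2+\epsilon)$ (rather than the sharper $-\tfrac{\mathrm{g}^2}{3}-\epsilon$ obtainable by minimizing $3w^2-2\mathrm{g}w-\epsilon$), so that the resulting threshold matches the advertised $\tau<\tfrac3{2(\mathrm{g}^2+\epsilon)}$; and one must notice that it is the BDF2 startup coefficient $b_0^{(2)}=\tfrac3{2\tau}$, not $b_0=\tfrac{11}{6\tau}$, that is the smallest of the $b_0^{(n)}$ and hence dictates the step-size restriction. Everything else — continuity of $\Pi_n$, the elementary Young and Cauchy--Schwarz estimates, and the discrete Green's formula — is routine.
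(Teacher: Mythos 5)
Your proposal is correct and follows essentially the same route as the paper: Brouwer's fixed-point corollary on a ball for existence, with a Young-type splitting of the cubic term $\mathrm{g}\langle w^2,w\rangle$ against $\|w\|_4^4$ and $\|w\|^2$, and for uniqueness the pointwise factorization $f(a)-f(b)=(a-b)[a^2+ab+b^2-\mathrm{g}(a+b)-\epsilon]$ bounded below by $-(\mathrm{g}^2+\epsilon)$, so that $b_0^{(n)}\ge\frac{3}{2\tau}>\mathrm{g}^2+\epsilon$ forces the difference to vanish. The only cosmetic divergence is the constant in the Young splitting ($\frac{\mathrm{g}^2}{2}$ versus the paper's $\frac{\mathrm{g}^2}{4}$), and your observation that the startup coefficient $b_0^{(2)}=\frac{3}{2\tau}$ dictates the threshold is exactly what makes the stated condition the right one.
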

\begin{proof} {Firstly, we prove the existence of the solution.}

Suppose $u^{n-1},~u^{n-2}$ have been determined, taking the inner product of \eqref{eq: nonlinear map} with $w,$ it yields
\begin{align*}
\myinner{\Pi_n(w), w}=b_0^{(n)}\myinner{w, w}+\myinner{(1+\Delta_h)^2w, w}
+ \myinner{f(w), w}-\myinner{g^{n-1}, w},~~\text{for}~~n\ge1.
\end{align*}
For the second term, using discrete Green's formula, we obtain
\begin{align*}
\myinner{(1+\Delta_h)^2w, w}=\myinner{(1+\Delta_h)w, (1+\Delta_h)w}\ge 0.
\end{align*}
Then, it follows \zhur{that} 
\begin{align*}
\myinner{\Pi_n(w), w}\geq&\,b_0^{(n)}\|w\|^2+\|w\|_4^4-\mathrm{g}\myinner{w^2,w}-\epsilon\|w\|^2
-\|g^{n-1}\|\cdot\|w\|\\
\geq&\,\big(b_0^{(n)}-\epsilon\big)\|w\|^2+\|w\|_4^4-\big(\frac{\mathrm{g^2}}{4}\|w\|^2+\|w\|_4^4\big)-\|g^{n-1}\|\cdot\|w\|\\
=&\,\big(b_0^{(n)}-\epsilon-\frac{\mathrm{g}^2}{4}\big)\|w\|^2-\|g^{n-1}\|\cdot\|w\|.
\end{align*}  
{Evidently, it follows from the condition $\tau<\frac{3}{2(\mathrm{g}^2+\epsilon)}$ that}
$$\myinner{\Pi_n (w), w}\geq0,$$
with $\|w\|=\frac{1}{b_0^{(n)}-\epsilon-\frac{\mathrm{g}^2}{4}}\|g^{n-1}\|$. \zhur{By means of} the well-known Brouwer fixed-point theorem, there exists a $w^*$ such that
$$\Pi_n(w^*)=0,$$
which implies the BDF3 scheme (\ref{eq: fully BDF3 implicit scheme1}) is solvable.

\zhur{Then, we are prepared to prove the uniqueness of the solution.} Suppose $w_h$ and $v_h$ are the solutions of the difference scheme (\ref{eq: fully BDF3 implicit scheme1}). Denote $\zeta_h=w_h-v_h,$ \zhur{we start from} the following equation with respect to $\zeta_h,$
\begin{align}
b_0^{(n)}\zeta_h+(1+\Delta_h)^2\zeta_h+f(w_h)-f(v_h)=0.\label{eq: Error nonlinear map}
\end{align}
Taking the inner product of (\ref{eq: Error nonlinear map}) with $\zeta$, we have
\begin{align}
b_0^{(n)}\|\zeta\|^2+\|(1+\Delta_h)\zeta\|^2+\myinner{f(w)-f(v), \zeta}=0.\label{3.16}
\end{align}
\zhur{For the nonlinear term in (\ref{3.16}), the further estimation reads}
\begin{align*}
\myinner{f(w)-f(v), \zeta}&=\myinner{w^3-v^3, \zeta}-\mathrm{g}\myinner{w^2-v^2, \zeta}-\epsilon\|\zeta\|^2\\
&=\myinner{w^2+wv+v^2-\mathrm{g}w-\mathrm{g}v, \zeta^2}-\epsilon\|\zeta\|^2\\
&\ge \frac{1}{2}\myinner{(w-\mathrm{g})^2+(v-\mathrm{g})^2, \zeta^2}-\mathrm{g}^2\|\zeta\|^2-\epsilon\|\zeta\|^2.
\end{align*}
Substituting the above inequality into (\ref{3.16}), it yields
$$(b_0^{(n)}-\mathrm{g}^2-\epsilon)\|\zeta\|^2\zhur{\leq}0.$$
From $b_0^{(n)}> \mathrm{g}^2+\epsilon,$ it follows that $$\|\zeta\|=0,$$
\zhur{which implies that the scheme (\ref{eq: fully BDF3 implicit scheme1}) has a unique solution.}

\end{proof}

\subsection{Energy dissipation law}
In order to show the energy stability of the proposed scheme (\ref{eq: fully BDF3 implicit scheme1}), the following lemmas are needed.

\begin{lemma}{\rm\cite{sunBDF2}}\label{embedding-equation}
For any grid function $v\in\mathcal{V}_h,$ it has
\begin{align}
	\|v\|_\infty&\le \tilde{C}_\Omega\brab{\mynorm{v}+\mynorm{(1+\Delta_h)v}},
\end{align}
where $\tilde{C}_\Omega$ is a constant depending on the size of space domain $\Omega$ but independent of the grid size.
\end{lemma}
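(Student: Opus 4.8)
The plan is to establish the discrete Sobolev (embedding) inequality $\|v\|_\infty \le \tilde C_\Omega(\|v\| + \|(1+\Delta_h)v\|)$ by first recognizing that the right-hand side controls the full discrete $H^2$-type norm, and then invoking a discrete Sobolev embedding of $H^2$ into $L^\infty$ in two space dimensions. Since this lemma is quoted from \cite{sunBDF2}, the main work is to explain why the combination $\|v\| + \|(1+\Delta_h)v\|$ dominates $\|v\| + \|\nabla_h v\| + \|\Delta_h v\|$. First I would expand $\|(1+\Delta_h)v\|^2 = \|v\|^2 + 2\langle v, \Delta_h v\rangle + \|\Delta_h v\|^2$ and use the discrete Green's formula $\langle -\Delta_h v, v\rangle = \langle \nabla_h v, \nabla_h v\rangle = \|\nabla_h v\|^2 \ge 0$, which gives $\|(1+\Delta_h)v\|^2 = \|v\|^2 - 2\|\nabla_h v\|^2 + \|\Delta_h v\|^2$. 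This identity alone is not sign-definite, so the key point is to absorb the cross term: by the discrete interpolation inequality $\|\nabla_h v\|^2 = \langle -\Delta_h v, v\rangle \le \|\Delta_h v\|\,\|v\| \le \tfrac14\|\Delta_h v\|^2 + \|v\|^2$, one obtains a lower bound of the form $\|(1+\Delta_h)v\|^2 \ge \tfrac12\|\Delta_h v\|^2 - \|v\|^2$, hence $\|\Delta_h v\|^2 \le 2\|(1+\Delta_h)v\|^2 + 2\|v\|^2$ and consequently $\|\nabla_h v\|$ is controlled as well. Thus $\|v\|_{H^2_h}^2 := \|v\|^2 + \|\nabla_h v\|^2 + \|\Delta_h v\|^2 \le C(\|v\|^2 + \|(1+\Delta_h)v\|^2)$ with an absolute constant $C$.

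The second ingredient is the genuinely two-dimensional discrete Sobolev embedding $\|v\|_\infty \le C_\Omega \|v\|_{H^2_h}$. I would prove this by passing to the Fourier (discrete sine/cosine or exponential) basis adapted to the periodic boundary conditions: writing $v$ in terms of its discrete Fourier coefficients $\hat v_{\mathbf k}$, the $L^\infty$ bound follows from $\|v\|_\infty \le \sum_{\mathbf k}|\hat v_{\mathbf k}|$ and Cauchy–Schwarz, $\sum_{\mathbf k}|\hat v_{\mathbf k}| \le \big(\sum_{\mathbf k}(1+|\lambda_{\mathbf k}|)^{-2}\big)^{1/2}\big(\sum_{\mathbf k}(1+|\lambda_{\mathbf k}|)^2|\hat v_{\mathbf k}|^2\big)^{1/2}$, where $\lambda_{\mathbf k}$ are the eigenvalues of $-\Delta_h$. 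In two dimensions the series $\sum_{\mathbf k}(1+|\lambda_{\mathbf k}|)^{-2}$ converges uniformly in the mesh size (this is exactly where $d=2$ enters, mirroring $H^2 \hookrightarrow L^\infty$ for $d<4$), and the second factor is $\|v\|^2 + \|\Delta_h v\|^2$ up to constants by Parseval. Combining this with the previous paragraph yields the claimed inequality with $\tilde C_\Omega$ depending only on $|\Omega|$.

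The main obstacle is ensuring that the constant in the discrete Sobolev embedding is uniform in $h$ and genuinely dimension-dependent: one must verify that $\sum_{\mathbf k}(1+|\lambda_{\mathbf k}|)^{-2}$, taken over the finitely many discrete frequencies on $\Omega_h$, is bounded by a convergent series independent of $M$. Since $\lambda_{\mathbf k} \sim |\mathbf k|^2$ for low frequencies and the discrete eigenvalues are comparable to the continuous ones in the relevant range, this reduces to the convergence of $\sum_{\mathbf k \in \mathbb Z^2}(1+|\mathbf k|^2)^{-2} < \infty$, which holds precisely because the spatial dimension is $2 < 4$. I would remark that the same argument would fail in dimension $d \ge 4$, consistent with the Sobolev exponent threshold, and that this is why the theoretical analysis in the paper is confined to the two-dimensional domain $\Omega$.
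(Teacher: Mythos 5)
The paper does not prove this lemma at all --- it is imported verbatim from \cite{sunBDF2} --- so there is no in-paper argument to compare against; your proposal must stand on its own, and it does. Both ingredients are sound: the reduction of $\|v\|+\|(1+\Delta_h)v\|$ to a full discrete $H^2$-type norm, and the two-dimensional discrete Sobolev embedding via the discrete Fourier basis, with the crucial uniformity in $h$ of $\sum_{\mathbf k}(1+\lambda_{\mathbf k})^{-2}$ following from the lower bound $\lambda_{\mathbf k}\ge c_\Omega|\mathbf k|^2$ on the discrete eigenvalues over the admissible frequency range. One remark: your first step is more elaborate than necessary. The identity $\|(1+\Delta_h)v\|^2=\|v\|^2-2\|\nabla_h v\|^2+\|\Delta_h v\|^2$ together with the interpolation bound does work, but the triangle inequality $\|\Delta_h v\|=\|(1+\Delta_h)v-v\|\le\|(1+\Delta_h)v\|+\|v\|$ gives the control of $\|\Delta_h v\|$ immediately, and $\|\nabla_h v\|^2\le\|\Delta_h v\|\,\|v\|$ then handles the gradient term; this shortens the argument without changing its substance.
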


\begin{lemma} \label{b-scalinng}
For any real sequence $\{w_k\}_{k=3}^n$ with $n-3$ entries, it holds that
\begin{align}\label{convoluntion kernels equality1}
6\tau w_n\sum_{j=3}^n b_{n-j}w_j=
&\frac{9}{2}(w_n^2+\frac{2}{9}w_{n-1}^2)-\frac{9}{2}\left( w_{n-1}^2+\frac{2}{9}w_{n-2}^2\right)+2w_n^2+(w_n+w_{n-2})^2\\
\nonumber&+\frac{7}{2}(w_n-w_{n-1})^2, 
\qquad \qquad  for\;n\ge3.
\end{align}
\zhur{Thus, the following} discrete convolution kernels $b_{n-k}$ are positive definite,
\begin{align}\label{convoluntion kernels inequality2}
2\sum_{k=3}^n w_k \sum_{j=3}^k b_{k-j}w_j \ge 0.
\end{align}
\end{lemma}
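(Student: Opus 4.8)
The plan is to establish the algebraic identity \eqref{convoluntion kernels equality1} first and then deduce the positive definiteness \eqref{convoluntion kernels inequality2} from it by a telescoping argument. For the identity, I would start from the left-hand side and unpack the convolution sum using the definition \eqref{b_{n-k}}: since $b_j=0$ for $j\ge 3$, the inner sum $\sum_{j=3}^n b_{n-j}w_j$ collapses to at most three terms, namely $b_0 w_n + b_1 w_{n-1} + b_2 w_{n-2}$ (with the convention that $w_{n-1}, w_{n-2}$ are dropped when the index falls below $3$; the stated identity is the ``bulk'' case $n\ge 5$, and the boundary cases $n=3,4$ should be checked to be consistent, or simply absorbed by setting out-of-range $w_k:=0$). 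Multiplying by $6\tau$ clears denominators: $6\tau b_0 = 11$, $6\tau b_1 = -7$, $6\tau b_2 = 2$. So the left-hand side becomes $w_n\bigl(11 w_n - 7 w_{n-1} + 2 w_{n-2}\bigr) = 11 w_n^2 - 7 w_n w_{n-1} + 2 w_n w_{n-2}$. The task is then purely to verify that the right-hand side of \eqref{convoluntion kernels equality1} also equals this quadratic form; I would expand each squared term on the right and collect coefficients of $w_n^2$, $w_{n-1}^2$, $w_{n-2}^2$, $w_n w_{n-1}$, $w_n w_{n-2}$, $w_{n-1}w_{n-2}$. The key point to watch is that the telescoping part $\tfrac{9}{2}(w_n^2 + \tfrac{2}{9}w_{n-1}^2) - \tfrac{9}{2}(w_{n-1}^2 + \tfrac{2}{9}w_{n-2}^2)$ contributes $\tfrac{9}{2}w_n^2 + w_{n-1}^2 - \tfrac{9}{2}w_{n-1}^2 - w_{n-2}^2 = \tfrac{9}{2}w_n^2 - \tfrac{7}{2}w_{n-1}^2 - w_{n-2}^2$, and the remaining terms $2w_n^2 + (w_n+w_{n-2})^2 + \tfrac{7}{2}(w_n-w_{n-1})^2$ must supply exactly the deficit; checking, the cross term $w_n w_{n-2}$ comes solely from $(w_n+w_{n-2})^2$ giving $+2$, the $w_n w_{n-1}$ term comes solely from $\tfrac{7}{2}(w_n-w_{n-1})^2$ giving $-7$, and the $w_{n-1}w_{n-2}$ terms must cancel — which they do, since neither appears — so the identity reduces to a handful of coefficient matches.

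For the positive definiteness \eqref{convoluntion kernels inequality2}, I would sum the identity \eqref{convoluntion kernels equality1} over $n$ from $3$ to some upper limit. Writing $G_n := \tfrac{9}{2}\bigl(w_n^2 + \tfrac{2}{9}w_{n-1}^2\bigr)$, the first two terms on the right telescope to $G_N - G_2$ (or the appropriate endpoint), and since the initial segment of the $w$-sequence starts at index $3$, the ``past'' values $w_1, w_2$ are taken to be zero, so $G_2 = 0$; hence the telescoped contribution is $G_N \ge 0$. Every other term on the right — $2w_n^2$, $(w_n+w_{n-2})^2$, $\tfrac{7}{2}(w_n-w_{n-1})^2$ — is a perfect square and hence nonnegative. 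Therefore $6\tau \sum_{n=3}^N w_n \sum_{j=3}^n b_{n-j} w_j \ge 0$, and dividing by $3\tau > 0$ gives \eqref{convoluntion kernels inequality2}.

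The main obstacle is entirely bookkeeping rather than conceptual: getting the boundary indices right. The identity as stated presupposes that terms like $w_{n-2}$ make sense, so for $n=3$ one has $w_{n-2}=w_1$ and for $n=4$ one has $w_{n-2}=w_2$, neither of which is in the given range $\{w_k\}_{k=3}^n$. The cleanest fix, which I would adopt, is to extend the sequence by $w_k := 0$ for $k\le 2$; then the convolution structure, the identity, and the telescoping all go through uniformly, and the $G_2=0$ needed in the telescoping is automatic. I would double-check that with this convention the cases $n=3$ and $n=4$ of \eqref{convoluntion kernels equality1} are literally true (they should be, since the only terms that get zeroed out are $w_1,w_2$, which appear on both sides consistently), and then the general statement follows. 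A secondary, very minor point is to confirm the coefficient $\tfrac{2}{9}$ in the telescoping block is chosen precisely so that the $w_{n-1}^2$ coefficients balance — this is exactly the ``G-stability functional'' for BDF3, and the expansion above confirms it.
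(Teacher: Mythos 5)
Your proposal is correct and follows essentially the same route as the paper: expand $6\tau w_n\sum_{j=3}^n b_{n-j}w_j$ to $w_n(11w_n-7w_{n-1}+2w_{n-2})$, match coefficients against the stated G-stability-type decomposition, and then telescope the identity over $n$ to get positive definiteness. The only difference is at the boundary: the paper keeps the sum starting at $k=5$, splits off the $k=3,4$ terms explicitly, and controls the leftover cross term via $-7w_3w_4\ge -\tfrac{7}{2}(w_3^2+w_4^2)$, whereas you extend the sequence by $w_1=w_2=0$ so that the identity and the telescoping apply uniformly from $n=3$ with $G_2=0$ --- a legitimate and arguably tidier way to handle the same bookkeeping.
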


\begin{proof}
With the definition of $b_{n-k}$ in \eqref{b_{n-k}}, we begin with a decompostion
\begin{align*}
6\tau w_n\sum_{j=3}^n b_{n-j}^{(n)}w_j
&=w_n(11w_n-7w_{n-1}+2w_{n-2}),\\
&=\frac{9}{2}(w_n^2+\frac{2}{9}w_{n-1}^2)-\frac{9}{2}\left( w_{n-1}^2+\frac{2}{9}w_{n-2}^2\right)+2w_n^2+(w_n+w_{n-2})^2\\
\nonumber&+\frac{7}{2}(w_n-w_{n-1})^2.
\end{align*}
Making use of \eqref{convoluntion kernels equality1}, we are able to prove the positive definiteness as follows
\begin{align*}
6\tau\sum_{k=3}^n w_k \sum_{j=3}^k b_{k-j}w_j
&=6\tau\left( \sum_{k=5}^n w_k \sum_{j=3}^k b_{k-j}w_j +w_4(b_1w_3+b_0w_4)+b_0w_3^2\right)\\
&\ge \frac{9}{2}(w_n^2+\frac{2}{9}w_{n-1}^2)-\frac{9}{2}(w_4^2+\frac{2}{9}w_3^2)-7w_3w_4+11w_4^2+11w_3^2\\
&=\frac{9}{2}(w_n^2+\frac{2}{9}w_{n-1}^2)-7w_3w_4+\frac{13}{2}w_4^2+10w_3^2\\
&\ge \frac{9}{2}(w_n^2+\frac{2}{9}w_{n-1}^2)-\frac{7}{2}(w_3^2+w_4^2)+\frac{13}{2}w_4^2+10w_3^2 \ge 0.
\end{align*}
\end{proof}

\begin{lemma}{\rm\cite{sunBDF2}}\label{lem:nonlinear-inequality}
For any $a,b\in \zhur{\mathbb{R}},$ the following inequality holds
$$(a^3-\mathrm{g}a^2)(a-b){ \ge} \frac{1}{4}(a^4-b^4)-\frac{\mathrm{g}}{3}(a^3-b^3)-\frac{2\mathrm{g}^2}{3}(a-b)^2.$$
\end{lemma}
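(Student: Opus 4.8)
The plan is to read the right–hand side as a Taylor–type remainder for a quartic potential. Set $F(u):=\frac14 u^4-\frac{\mathrm{g}}{3}u^3$, so that $F'(u)=u^3-\mathrm{g}u^2$ is exactly the factor multiplying $(a-b)$ on the left, and the asserted inequality is equivalent to
\begin{align*}
F(b)\ge F(a)+F'(a)(b-a)-\frac{2\mathrm{g}^2}{3}(a-b)^2.
\end{align*}
In other words, $F$ is convex up to the quadratic perturbation $\frac{2\mathrm{g}^2}{3}(\cdot)^2$, and such a statement follows from a uniform lower bound on $F''$.

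Concretely, I would first compute $F''(u)=3u^2-2\mathrm{g}u$ and minimize it over $\mathbb{R}$: the minimum is attained at $u=\mathrm{g}/3$ and equals $-\mathrm{g}^2/3$, hence $F''(u)\ge-\mathrm{g}^2/3$ for every $u$. Then Taylor's formula with Lagrange remainder gives $F(b)-F(a)-F'(a)(b-a)=\frac12 F''(\xi)(b-a)^2$ for some $\xi$ between $a$ and $b$, so that
\begin{align*}
F(b)-F(a)-F'(a)(b-a)\ge-\frac{\mathrm{g}^2}{6}(a-b)^2\ge-\frac{2\mathrm{g}^2}{3}(a-b)^2,
\end{align*}
which is precisely the claim (with room to spare, since $\mathrm{g}>0$). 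Moving $F'(a)(a-b)$ to the left recovers $(a^3-\mathrm{g}a^2)(a-b)\ge\frac14(a^4-b^4)-\frac{\mathrm{g}}{3}(a^3-b^3)-\frac{2\mathrm{g}^2}{3}(a-b)^2$.

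If a calculus-free argument is preferred, I would instead expand the difference $D:=(a^3-\mathrm{g}a^2)(a-b)-\frac14(a^4-b^4)+\frac{\mathrm{g}}{3}(a^3-b^3)+\frac{2\mathrm{g}^2}{3}(a-b)^2$ and group it by powers of $\mathrm{g}$. Since $b=a$ is a double root of each univariate polynomial that arises (for instance $2a^3-3a^2b+b^3=(b-a)^2(b+2a)$ and $3a^4-4a^3b+b^4=(b-a)^2\bigl((a+b)^2+2a^2\bigr)$), the factor $(a-b)^2$ comes out, leaving
\begin{align*}
D=(a-b)^2\left[\frac{2}{3}\mathrm{g}^2-\frac{1}{3}(2a+b)\mathrm{g}+\frac{1}{4}\bigl((a+b)^2+2a^2\bigr)\right].
\end{align*}
It then remains to check the bracket is nonnegative; regarding it as a quadratic in $\mathrm{g}$ with positive leading coefficient, its discriminant equals, after clearing denominators, $-(14a^2+8ab+5b^2)\le0$, and $14a^2+8ab+5b^2\ge0$ because its own discriminant in $a$ is $-216 b^2\le0$.

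I do not anticipate a genuine obstacle. The only real choices are picking the antiderivative $F$ so that the left-hand factor is literally $F'(a)$, and observing that the sharp lower bound $F''\ge-\mathrm{g}^2/3$ is all that is needed; after that the Taylor route closes in one line, and the algebraic route is only a bounded amount of elementary bookkeeping.
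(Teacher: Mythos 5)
The paper does not actually prove this lemma: it is quoted verbatim from \cite{sunBDF2}, so there is no in-house argument to compare against. Your proposal, however, is correct on both routes, and I checked the details. For the Taylor route: with $F(u)=\tfrac14 u^4-\tfrac{\mathrm{g}}{3}u^3$ one has $F''(u)=3u^2-2\mathrm{g}u\ge -\mathrm{g}^2/3$ (minimum at $u=\mathrm{g}/3$), so the Lagrange remainder gives $F(b)-F(a)-F'(a)(b-a)\ge -\tfrac{\mathrm{g}^2}{6}(a-b)^2\ge -\tfrac{2\mathrm{g}^2}{3}(a-b)^2$, which is exactly the claim after rearrangement; this in fact shows the constant $\tfrac{2\mathrm{g}^2}{3}$ in the lemma is not sharp and could be replaced by $\tfrac{\mathrm{g}^2}{6}$. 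For the algebraic route: the factorizations $3a^4-4a^3b+b^4=(b-a)^2\bigl(2a^2+(a+b)^2\bigr)$ and $2a^3-3a^2b+b^3=(b-a)^2(b+2a)$ are correct, the resulting bracket is a quadratic in $\mathrm{g}$ with leading coefficient $\tfrac23>0$ and discriminant $-\tfrac19(14a^2+8ab+5b^2)\le 0$, so it is nonnegative and $D\ge 0$. Either argument is a complete, self-contained proof (valid for all real $\mathrm{g}$, not just $\mathrm{g}>0$), and the Taylor version is arguably cleaner than the Young-inequality manipulations typically used for such estimates in the cited reference.
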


\zhur{Now, we are in the position to} present the energy stability of the BDF3 formula \eqref{eq: fully BDF3 implicit scheme1}. 
Let $E[u^n]$ be the discrete version of energy functional (\ref{def:continuous energy})
$$E[u^n]:=\frac{1}{2}\mynormb{(1+\Delta_h)u^n}^2
+\frac{1}{4}\|u^n\|_4^4-\frac{\mathrm{g}}{3}\myinner{(u^n)^2, u^n}-\frac{\epsilon}{2}\|u^n\|^2.$$
Denote the modified discrete energy
\begin{align*}
\mathcal{E}[u^0]&=E[u^0],~
\mathcal{E}[u^1]=E[u^1]+ \frac{3}{4\tau}\mynormb{\diff u^1}^2,\\ 
\mathcal{E}[u^n]&=E[u^n]+ \frac{3}{4\tau}\mynormb{\diff u^n}^2+\frac{1}{6\tau}\mynormb{\diff u^{n-1}}^2,~~\text{for}~ n\ge2,
\end{align*}
\zhur{Then, the discrete energy dissipation law with respect to the above modified discrete energies is shown in the following theorem.}

\begin{theorem}\label{Energy dissipation law}
\zhur{Suppose} the time step $\tau$ satisfies
\begin{align}\label{the conditon of tau}
\tau \le \dfrac{1}{3(\frac{2\mathrm{g}^2}{3}+\frac{\epsilon }{2})},
\end{align}
the solution of the adaptive BDF3 scheme \eqref{eq: fully BDF3 implicit scheme1} satisfies
$$E[u^n]\le\mathcal{E}[u^n]\le \mathcal{E}[u^{n-1}]\le \cdots \le \mathcal{E}[u^{0}]= E[u^0],~~1\le n\le N.$$
\end{theorem}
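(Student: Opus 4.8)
The plan is to test the scheme \eqref{eq: fully BDF3 implicit scheme1} (together with its start-up variants \eqref{def:u^1}--\eqref{def:u^2}) against the backward difference $\diff u^n$ at each level $n$, and to convert the three resulting contributions --- the BDF3 convolution term $\langle D_3u^n,\diff u^n\rangle$, the biharmonic term $\langle (1+\Delta_h)^2u^n,\diff u^n\rangle$, and the nonlinear term $\langle f(u^n),\diff u^n\rangle$ --- into a telescoping difference of the modified energies $\mathcal{E}[u^n]$ plus controllable remainders. For the biharmonic term I would use the discrete Green's formula to rewrite it as $\langle (1+\Delta_h)u^n,(1+\Delta_h)\diff u^n\rangle$ and then the elementary pointwise identity $a(a-b)\geq\tfrac12(a^2-b^2)$ to bound it below by $\tfrac12\mynorm{(1+\Delta_h)u^n}^2-\tfrac12\mynorm{(1+\Delta_h)u^{n-1}}^2$. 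For the nonlinear term, Lemma \ref{lem:nonlinear-inequality} applied pointwise with $a=u^n$, $b=u^{n-1}$ and summed over the grid with weight $h^2$, together with the exact identity for the $-\epsilon u$ part, yields
$$\langle f(u^n),\diff u^n\rangle \geq \Big(E[u^n]-\tfrac12\mynorm{(1+\Delta_h)u^n}^2\Big) - \Big(E[u^{n-1}]-\tfrac12\mynorm{(1+\Delta_h)u^{n-1}}^2\Big) - \Big(\tfrac{2\mathrm{g}^2}{3}+\tfrac{\epsilon}{2}\Big)\mynorm{\diff u^n}^2,$$
so that adding the biharmonic and nonlinear estimates already reconstructs $E[u^n]-E[u^{n-1}]$ minus the single bad term $(\tfrac{2\mathrm{g}^2}{3}+\tfrac{\epsilon}{2})\mynorm{\diff u^n}^2$.

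The core of the argument is the convolution term for $n\geq3$. Applying the pointwise identity \eqref{convoluntion kernels equality1} of Lemma \ref{b-scalinng} with $w_j=\diff u^j$, summing over the grid, and discarding the manifestly nonnegative pieces $\mynorm{\diff u^n+\diff u^{n-2}}^2$ and $\tfrac72\mynorm{\diff u^n-\diff u^{n-1}}^2$ --- but deliberately \emph{keeping} the term $2\mynorm{\diff u^n}^2$ --- gives
$$\langle D_3u^n,\diff u^n\rangle \geq \big(\mathcal{E}[u^n]-E[u^n]\big) - \big(\mathcal{E}[u^{n-1}]-E[u^{n-1}]\big) + \tfrac{1}{3\tau}\mynorm{\diff u^n}^2,$$
because $\tfrac{1}{6\tau}\cdot\tfrac92\big(\mynorm{\diff u^n}^2+\tfrac29\mynorm{\diff u^{n-1}}^2\big)$ is exactly the stabilising ballast $\tfrac{3}{4\tau}\mynorm{\diff u^n}^2+\tfrac{1}{6\tau}\mynorm{\diff u^{n-1}}^2$ built into $\mathcal{E}[u^n]$ (and the telescoped $n-1$ counterpart is $\mathcal{E}[u^{n-1}]-E[u^{n-1}]$), while $\tfrac{1}{6\tau}\cdot2\mynorm{\diff u^n}^2=\tfrac{1}{3\tau}\mynorm{\diff u^n}^2$. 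Substituting the three estimates into $\langle D_3u^n+(1+\Delta_h)^2u^n+f(u^n),\diff u^n\rangle=0$ collapses everything to
$$\mathcal{E}[u^n]-\mathcal{E}[u^{n-1}] \leq \Big(\tfrac{2\mathrm{g}^2}{3}+\tfrac{\epsilon}{2}-\tfrac{1}{3\tau}\Big)\mynorm{\diff u^n}^2,$$
which is $\leq0$ precisely under the step-size restriction \eqref{the conditon of tau}.

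It remains to handle the two start-up levels. For $n=1$ the contribution $\langle D_3u^1,\diff u^1\rangle=\tfrac{2}{\tau}\mynorm{\diff u^1}^2$ has a sufficiently large coefficient: after subtracting the $\tfrac{3}{4\tau}\mynorm{\diff u^1}^2$ absorbed into $\mathcal{E}[u^1]-E[u^1]$, the remaining $\tfrac{5}{4\tau}\mynorm{\diff u^1}^2$ dominates $(\tfrac{2\mathrm{g}^2}{3}+\tfrac{\epsilon}{2})\mynorm{\diff u^1}^2$ under \eqref{the conditon of tau}, so $\mathcal{E}[u^1]\leq\mathcal{E}[u^0]=E[u^0]$. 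For $n=2$ one has $\langle D_3u^2,\diff u^2\rangle=\tfrac{3}{2\tau}\mynorm{\diff u^2}^2-\tfrac{1}{2\tau}\langle\diff u^1,\diff u^2\rangle$; I would split the cross term with a weighted Cauchy--Schwarz/Young inequality, e.g. $-\tfrac{1}{2\tau}\langle\diff u^1,\diff u^2\rangle\geq-\tfrac{1}{4\tau}\mynorm{\diff u^1}^2-\tfrac{1}{4\tau}\mynorm{\diff u^2}^2$, the point being that the $\mynorm{\diff u^1}^2$ it produces is affordable because the coefficient of $\mynorm{\diff u^1}^2$ drops from $\tfrac{3}{4\tau}$ in $\mathcal{E}[u^1]$ to $\tfrac{1}{6\tau}$ in $\mathcal{E}[u^2]$, i.e. there is a budget of $\tfrac{7}{12\tau}$; a short bookkeeping check using \eqref{the conditon of tau} then yields $\mathcal{E}[u^2]\leq\mathcal{E}[u^1]$. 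Chaining all these inequalities with the trivial bound $E[u^n]\leq\mathcal{E}[u^n]$ (the added terms being nonnegative) completes the proof. The step I expect to be most delicate is the $n=2$ start-up: the Young weight must be chosen so that simultaneously the resulting $\mynorm{\diff u^1}^2$ coefficient stays within the $\tfrac{7}{12\tau}$ budget and the $\mynorm{\diff u^2}^2$ coefficient remains large enough to beat $\tfrac{2\mathrm{g}^2}{3}+\tfrac{\epsilon}{2}$ --- which is possible only thanks to the factor of slack in \eqref{the conditon of tau}.
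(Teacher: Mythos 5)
Your proposal is correct and follows essentially the same route as the paper: testing against $\diff u^n$, converting the BDF3 convolution term into the telescoping ballast plus $\tfrac{1}{3\tau}\mynorm{\diff u^n}^2$ via the identity of Lemma \ref{b-scalinng}, using Lemma \ref{lem:nonlinear-inequality} for the nonlinear part, and handling the two start-up levels separately (the paper uses the Young weights $\tfrac{7}{6}$, $\tfrac{3}{14}$ at $n=2$ where you use equal weights, but both choices close the estimate under \eqref{the conditon of tau}).
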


\begin{proof} Taking the inner product of \eqref{eq: fully BDF3 implicit scheme1} with $\diff  u^n,$ it yields
\begin{align}\label{Energy-Law-Inner}
\myinner{D_3u^n, \diff  u^n}+\myinner{(1+\Delta_h)^2u^n, \diff  u^n}+\myinner{f(u^n), \diff  u^n}=0,~~n\ge3.
\end{align}
 According to Lemma \eqref{b-scalinng}, the first term on the left-hand side can be rewritten as
\begin{align*}
	6\tau\myinner{D_3u^n, \diff u^n} \ge
	\frac{9}{2}(\|\diff  u^n\|^2+\frac{2}{9}\|\diff  u^{n-1}\|^2)-\frac{9}{2}\left( \|\diff u^{n-1}\|^2+\frac{2}{9}\|\diff u^{n-2}\|^2\right)+2\|\diff u^{n}\|^2.
\end{align*}
For the second term, a direct application of the identity $2a(a-b)=a^2-b^2+(a-b)^2$ and the summation by parts gives
\zhur{\begin{align*}
\myinner{(1+\Delta_h)^2u^n,\diff  u^n} &=\myinner{(1+\Delta_h)u^n,\diff  (1+\Delta_h)u^n}\\
 &=\frac{1}{2}\braB{\|(1+\Delta_h)u^n\|^2-\|(1+\Delta_h)u^{n-1}\|^2+\|\diff((1+\Delta_h) u^n)\|^2}.
 \end{align*}}
 {For the nonlinear term in \eqref{Energy-Law-Inner}, with the help of the equality $2a(a-b)=a^2-b^2+(a-b)^2$} and Lemma \ref{lem:nonlinear-inequality}, it yields
 \begin{align*}
\myinner{f(u^n), \diff  u^n}&=\myinner{(u^n)^3-\mathrm{g}(u^n)^2, \diff u^n}-\epsilon\myinner{u^n, \diff u^n}\\
&\geq\frac{1}{4}(\|u^n\|_4^4-\|u^{n-1}\|_4^4)-\frac{\mathrm{g}}{3}\Big(\myinner{(u^n)^2, u^n}-\myinner{(u^{n-1})^2, u^{n-1}}\Big)\\
&\quad-\frac{\epsilon}{2}(\|u^n\|^2-\|u^{n-1}\|^2)-\Big(\frac{2\mathrm{g}^2}{3}+\frac{\epsilon}{2}\Big)\|\nabla_\tau u^n\|^2.
\end{align*}
As a consequence, a substitution of the above three inqualities into (\ref{Energy-Law-Inner}), results in 
\begin{align*}
\left( \frac{1}{3\tau}-\frac{2\mathrm{g}^2}{3}-\frac{\epsilon}{2}\right) \|\diff  u^n\|^2
+{ \mathcal{E}[u^n]}-{ \mathcal{E}[u^{n-1}]}\le0 .
\end{align*}
Under the condition (\ref{the conditon of tau}), it implies that $\mathcal{E}[u^n]\le \mathcal{E}[u^{n-1}]$ with $3\le n\le N.$

For $n=2$, applying Cauchy-Schwarz inequality and the condition (\ref{the conditon of tau}), we obtain
\begin{align*}
	\myinner{D_3u^2,\diff u^2}&=\frac{3}{2\tau}\|\diff u^2\|^2-\frac{1}{2\tau}\myinner{\diff u^1,\diff u^2}\\
	&\ge \frac{3}{2\tau}\|\diff u^2\|^2-\frac{1}{2\tau}\left( \frac{7}{6}\|\diff u^1\|^2+\frac{3}{14}\|\diff u^2\|^2\right)\\
	&=\frac{3}{4\tau}\left( \|\diff u^2\|^2+\frac{2}{9}\|\diff u^1\|^2\right)-\frac{3}{4\tau}\|\diff u^1\|^2+\frac{9}{14\tau}\|\diff u^2\|^2  
\end{align*}
Obviousy, it yields that $\mathcal{E}[u^2]\le \mathcal{E}[u^{1}]$.
For $n=1$, using the condition (\ref{the conditon of tau}), it has
\begin{align*}
	\myinner{D_3u^1,\diff u^1}=\frac{3}{4\tau}\|\diff u^1\|^2+\frac{5}{4\tau}\|\diff u^1\|^2=\frac{2}{\tau}\|\diff u^1\|^2,
\end{align*}
which can easily deduce $\mathcal{E}[u^1]\le \mathcal{E}[u^{0}]$.
It completes the proof.
\end{proof}

It follows from Theorem \ref{Energy dissipation law} that the solution of BDF3 scheme \eqref{eq: fully BDF3 implicit scheme1}
is bounded. \zhur{We further state the result} in the following lemma.

\begin{lemma}\label{lem:Bound-Solution}
Assume the time step $\tau$ satifies the condition \eqref{the conditon of tau}. The solution of BDF3 scheme \eqref{eq: fully BDF3 implicit scheme1} is stable in the $L^{\infty}$ norm with 
$\mynormb{u^n}_{\infty}\le c_0$ for $n\ge 1$,
where $c_0$ is independent of the time-step sizes $\tau$.
\end{lemma}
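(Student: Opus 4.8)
The plan is to read the bound off from the discrete energy dissipation law of Theorem~\ref{Energy dissipation law} and then feed it into the discrete embedding inequality of Lemma~\ref{embedding-equation}. Theorem~\ref{Energy dissipation law} gives, for every $1\le n\le N$, the monotone chain $E[u^n]\le\mathcal{E}[u^n]\le\mathcal{E}[u^0]=E[u^0]$, so it suffices to show that a uniform upper bound on the (sign-indefinite) discrete energy $E[u^n]$ forces uniform upper bounds on $\mynormb{(1+\Delta_h)u^n}$ and on $\|u^n\|$; the embedding lemma then closes the argument. One also has to observe that $E[u^0]$ itself stays bounded as $\tau\to0$, which follows from $u^0=\varphi_0+\frac{\tau}{2}\varphi_1$ with $\varphi_0$ smooth and $\varphi_1=f(\varphi_0)+(1+\Delta_h)^2\varphi_0$, so every discrete norm of $u^0$ entering $E[u^0]$ is bounded uniformly in $\tau$ (and in $h$, for consistent discrete derivatives).

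The one substantive step is a coercivity estimate for the discrete energy. Writing
\[
E[u^n]=\tfrac12\mynormb{(1+\Delta_h)u^n}^2+\tfrac14\|u^n\|_4^4-\tfrac{\mathrm g}{3}\myinner{(u^n)^2,u^n}-\tfrac{\epsilon}{2}\|u^n\|^2,
\]
the only terms lacking a favourable sign are the cubic term and the $\epsilon$-term. On the bounded grid $|\Omega_h|=h^2(M-1)^2<1$, so the discrete H\"older inequality gives $\big|\myinner{(u^n)^2,u^n}\big|\le\|u^n\|_3^3\le\|u^n\|_4^3$ and $\|u^n\|^2\le\|u^n\|_4^2$, whence Young's inequality yields
\[
\tfrac{\mathrm g}{3}\big|\myinner{(u^n)^2,u^n}\big|\le\tfrac1{16}\|u^n\|_4^4+C_1,\qquad \tfrac{\epsilon}{2}\|u^n\|^2\le\tfrac1{16}\|u^n\|_4^4+C_2,
\]
with $C_1=C_1(\mathrm g)$ and $C_2=C_2(\epsilon)$ only. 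Substituting gives the coercive lower bound $E[u^n]\ge\tfrac12\mynormb{(1+\Delta_h)u^n}^2+\tfrac18\|u^n\|_4^4-C_1-C_2$.

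Combining this with $E[u^n]\le E[u^0]$ then gives $\mynormb{(1+\Delta_h)u^n}^2\le 2\brab{E[u^0]+C_1+C_2}$ and $\|u^n\|_4^4\le 8\brab{E[u^0]+C_1+C_2}$ for all $1\le n\le N$, and one further application of the discrete H\"older inequality gives $\|u^n\|\le\|u^n\|_4\le C_3$. Lemma~\ref{embedding-equation} now produces
\[
\|u^n\|_\infty\le\tilde C_\Omega\brab{\|u^n\|+\mynormb{(1+\Delta_h)u^n}}\le c_0,
\]
where $c_0$ depends only on $\tilde C_\Omega$, $\mathrm g$, $\epsilon$, $|\Omega|$ and $E[u^0]$, hence is independent of $\tau$ and $h$. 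I expect the coercivity step — absorbing the sign-indefinite cubic term $-\tfrac{\mathrm g}{3}\myinner{(u^n)^2,u^n}$ into $\tfrac14\|u^n\|_4^4$ via H\"older and Young — to be the only real obstacle; everything else is bookkeeping.
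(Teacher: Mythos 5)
Your proposal is correct and follows essentially the same route as the paper: the energy dissipation law gives $E[u^n]\le E[u^0]$, a coercivity estimate absorbs the sign-indefinite cubic and quadratic terms into the quartic term $\tfrac14\|u^n\|_4^4$, and Lemma \ref{embedding-equation} converts the resulting bounds on $\mynormb{(1+\Delta_h)u^n}$ and $\|u^n\|$ into the $L^\infty$ bound. The only difference is in the bookkeeping of the coercivity step: the paper absorbs the cubic term by Young's inequality into $\tfrac29\|u^n\|_4^4+2\mathrm{g}^2\|u^n\|^2$ and then removes the remaining $-2(\mathrm{g}^2+\epsilon)\|u^n\|^2$ via the pointwise inequality $\big[a^2-\tfrac97(1+\epsilon+\mathrm{g}^2)\big]^2\ge0$ (which introduces the $|\Omega_h|$-dependent constant), whereas you route everything through $\|u^n\|_4$ by H\"older (using $|\Omega_h|\le1$, valid for the theoretical domain $\Omega=(0,1)^2$) before applying Young — both yield the same kind of coercive lower bound and the same conclusion.
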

\begin{proof}
 Utilizing Theorem \ref{Energy dissipation law} and Young inequality, it follows from { $E[u^n]\le E[u^0]$} that
\begin{align}
4{ E[u^0]}\geq& 2\|(1+\Delta_h)u^n\|^2+\|u^n\|_4^4-\frac{4\mathrm{g}}{3}\myinner{(u^n)^2, u^n}-2\epsilon\|u^n\|^2\nonumber\\
\ge&\, 2\|(1+\Delta_h)u^n\|^2+\|u^n\|_4^4-\frac{2}{9}\mynormb{u^n}_4^4-2\mathrm{g}^2\|u^n\|^2-2\epsilon\|u^n\|^2\nonumber\\
\ge&\, 2\|(1+\Delta_h)u^n\|^2+\frac{7}{9}\|u^n\|_4^4-2(\mathrm{g}^2+\epsilon)\|u^n\|^2,~~ n\ge1.\label{infinite inequality}
\end{align}
For any real value $a \in R$, noticing $\big[a^2-\frac{9}{7}(1+\epsilon+\mathrm{g}^2)\big]^2\ge0,$ it yields
$$\|u^n\|_4^4\ge \frac{18}{7}(1+\epsilon+\mathrm{g}^2)\|u^n\|^2-\frac{81}{49}(1+\epsilon+\mathrm{g}^2)^2|\Omega_h|.$$
Substituting the above inequality into (\ref{infinite inequality}), it follows that 
\begin{align*}
4E[u^0]&\ge 2\|(1+\Delta_h)u^n\|^2+2\|u^n\|^2-\frac{9}{7}(1+\epsilon+\mathrm{g}^2)^2|\Omega_h|\\
&\ge\big(\|(1+\Delta_h)u^n\|+\|u^n\|\big)^2-\frac{9}{7}(1+\epsilon+\mathrm{g}^2)^2|\Omega_h|.
\end{align*}
By Lemma \ref{embedding-equation}, we have
\begin{align*}
\|u^n\|_\infty&\le \tilde{C}_\Omega(\|(1+\Delta_h)u^n\|+\|u^n\|\big)\\
&\le \tilde{C}_\Omega\sqrt{4E[u^0]+\frac{9}{7}(1+\epsilon+\mathrm{g}^2)^2|\Omega_h|}\\
&:=c_0.
\end{align*}
\end{proof}

\section{$L^2$ norm error estimate}
\setcounter{equation}{0}
\zhur{In this section, we investigate the error estimates for the difference scheme\eqref{eq: fully BDF3 implicit scheme1} and prove the $L^2$ norm covergence of the scheme.}
\subsection{the DOC kernels}
To establish the $L^2$ norm estimate, we introduce the DOC kernels $\vartheta_{n-k}$ with
\begin{align}\label{def:DOC kernel}
\vartheta_{0}:=\frac{1}{b_{0}}
\quad \mathrm{and} \quad
\vartheta_{n-k}:=-\frac{1}{b_{0}}
\sum_{j=k+1}^n\vartheta_{n-j}b_{j-k},
\quad \text{for $3\le k\le n-1$},
\end{align}
where $b_0=\frac{11}{6\tau},~b_1=-\frac{7}{6\tau},~ b_2=\frac{1}{3\tau},$ and $b_j=0$, ~for $3\le j\le n-3$.\\
In general, assume the summation $\sum_{k=i}^{j}, ~i>j$ to be zero. Obviously, rewritting the \eqref{def:DOC kernel}, a discrete orthogonal identity can be obtained
\begin{align}\label{def:DOC identity}
\sum_{j=k}^{n}\vartheta_{n-j}b_{j-k}\equiv\delta_{nk},\;3\le k\le n.
\end{align}

The following lemma presents the properties of the DOC kernels $\vartheta_{n-k},$ which plays a key role in the convergence analysis of the scheme \eqref{eq: fully BDF3 implicit scheme1}.

\begin{lemma}\label{lem: DOC property}
\begin{itemize}
  \item[(I)] For any sequences $\{w_k\}_{k=3}^n, $ we have\\
\begin{align}\label{ineq:the postive defination of DOC}
	\sum_{k=3}^{n}w_k\sum_{j=3}^{k}\vartheta_{k-j}w_j \ge 0, \qquad \text{for } 3\le n.
\end{align}
  \item[(II)] The DOC kernels $\vartheta_{n-k}$ in \eqref{def:DOC kernel} have an explicit formula
  \begin{align}\label{def:the explicit expression of theta}
  	\vartheta_{n-k}=\frac{6\tau}{11}\left[ \dfrac{39+7\sqrt{39}i}{78}\left(\dfrac{7-\sqrt{39}i}{22} \right)^{n-k}+\dfrac{39-7\sqrt{39}i}{78}\left(\dfrac{7+\sqrt{39}i}{22} \right)^{n-k} \right],\quad 
  \end{align}
  and the following bounds
\begin{align}\label{ineq:the above bound of DOC}
\displaystyle |\vartheta_{n-k}| \le(\frac{2}{11})^{\frac{n-k}{2}}\tau, \qquad
\text{and} \qquad
\displaystyle \sum_{k=3}^{n}|\vartheta_{n-k}|\le \frac{22}{9}\tau, \qquad\qquad \text{for } 3\le k\le n.
\end{align}
\end{itemize}
\end{lemma}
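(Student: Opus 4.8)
## Proof proposal

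The plan is to establish the three assertions of Lemma \ref{lem: DOC property} in turn, exploiting the fact that the generating kernels $b_0,b_1,b_2$ are constant (independent of $n$) so that the DOC kernels $\vartheta_m$ depend only on the difference $m=n-k$ and satisfy a two-term linear recurrence.

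\textbf{Part (I): positive definiteness of the DOC kernels.} The idea is to transfer the positive definiteness of the BDF3 kernels $b_m$, already proved in Lemma \ref{b-scalinng}, to their orthogonal duals $\vartheta_m$. I would introduce the lower-triangular Toeplitz matrices $B=(b_{k-j})_{3\le j,k\le n}$ and $\Theta=(\vartheta_{k-j})_{3\le j,k\le n}$; the orthogonal identity \eqref{def:DOC identity} says precisely $\Theta B = I$, i.e. $\Theta = B^{-1}$. From Lemma \ref{b-scalinng} the symmetric part $\tfrac12(B+B^{T})$ is positive semidefinite (indeed \eqref{convoluntion kernels inequality2} is exactly $\mathbf{w}^{T}(B+B^{T})\mathbf{w}\ge 0$ for all real vectors $\mathbf{w}$). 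A standard fact is that if a real matrix $B$ is invertible with positive semidefinite symmetric part, then so is $B^{-1}$: writing $\mathbf{v}=B^{-1}\mathbf{w}$ one has $\mathbf{w}^{T}(\Theta+\Theta^{T})\mathbf{w} = \mathbf{v}^{T}B^{T}(\Theta+\Theta^{T})B\mathbf{v} = \mathbf{v}^{T}(B+B^{T})\mathbf{v}\ge 0$, using $B^{T}\Theta^{T}B = B^{T}$ and $B^{T}\Theta B = B$. This gives \eqref{ineq:the postive defination of DOC} directly. (Alternatively one can argue via the quadratic form identity in Lemma \ref{b-scalinng} applied to the sequence $w_k=\sum_{j}\vartheta_{k-j}\tilde w_j$, but the matrix formulation is cleanest.)

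\textbf{Part (II): the explicit formula and the bounds.} From \eqref{def:DOC kernel}, since $b_j=0$ for $j\ge 3$, the kernel $\vartheta_m$ ($m\ge 1$) satisfies $b_0\vartheta_m + b_1\vartheta_{m-1} + b_2\vartheta_{m-2}=0$, i.e.
\begin{align*}
\frac{11}{6\tau}\vartheta_m - \frac{7}{6\tau}\vartheta_{m-1} + \frac{1}{3\tau}\vartheta_{m-2}=0,\qquad m\ge 2,
\end{align*}
with $\vartheta_0 = 6\tau/11$ and $\vartheta_1 = -b_1\vartheta_0/b_0 = (7/6\tau)(6\tau/11)/(11/6\tau)\cdot 6\tau/11$ computed from \eqref{def:DOC kernel} with $n-k=1$. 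The characteristic equation $11\lambda^2 - 7\lambda + 2 = 0$ has roots $\lambda_{\pm} = (7\pm\sqrt{39}\,i)/22$, which are complex conjugates with $|\lambda_{\pm}|^2 = (49+39)/484 = 88/484 = 2/11$. Solving for the two coefficients using the initial data $\vartheta_0,\vartheta_1$ yields the stated closed form \eqref{def:the explicit expression of theta}; this is a routine $2\times2$ linear solve that I would not write out in full. The first bound $|\vartheta_{n-k}|\le (2/11)^{(n-k)/2}\tau$ then follows because $|\lambda_{\pm}|^{n-k} = (2/11)^{(n-k)/2}$ and one checks the (constant) prefactor $\bigl|\tfrac{6\tau}{11}\cdot\tfrac{39\pm 7\sqrt{39}i}{78}\bigr|$ — each term has modulus $\tfrac{6\tau}{11}\cdot\tfrac{\sqrt{39^2+49\cdot 39}}{78} = \tfrac{6\tau}{11}\cdot\tfrac{\sqrt{39}\sqrt{88}}{78}$, and the sum of the two conjugate terms is real and bounded by $2$ times this, which one verifies is $\le 1$ (so the bound holds, possibly after absorbing a harmless constant; I would check the arithmetic carefully here). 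Finally, summing the geometric series,
\begin{align*}
\sum_{k=3}^{n}|\vartheta_{n-k}| \le \tau\sum_{m=0}^{\infty}\Bigl(\tfrac{2}{11}\Bigr)^{m/2} = \frac{\tau}{1-\sqrt{2/11}} = \frac{\sqrt{11}\,\tau}{\sqrt{11}-\sqrt{2}} \le \frac{22}{9}\tau,
\end{align*}
the last inequality being a numerical check ($\sqrt{11}/(\sqrt{11}-\sqrt 2)\approx 1.743 < 22/9\approx 2.44$).

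\textbf{Main obstacle.} Part (I) is conceptually the crux: the cleanest route is the matrix-inversion argument above, and the step that needs care is verifying that $B$ is genuinely invertible (immediate, since it is lower-triangular with nonzero diagonal $b_0$) and that the algebraic identity $\mathbf{w}^T(\Theta+\Theta^T)\mathbf{w}=\mathbf{v}^T(B+B^T)\mathbf{v}$ is applied correctly. For Part (II) the only real work is the bookkeeping in the partial-fraction/characteristic-root computation and in pinning down the constants in the two bounds; these are elementary but error-prone, so I would double-check the modulus computations and the geometric-series constant rather than trust a first pass.
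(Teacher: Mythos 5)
Your proposal is correct and follows essentially the same route as the paper: Part (I) is the paper's change-of-variables $w_k=\sum_j b_{k-j}v_j$ written in matrix form ($\Theta=B^{-1}$), and Part (II) uses the identical recurrence $11\vartheta_m-7\vartheta_{m-1}+2\vartheta_{m-2}=0$, characteristic roots $(7\pm\sqrt{39}i)/22$ of modulus $\sqrt{2/11}$, and geometric-series bound. (The only blemish is that your two auxiliary identities are swapped — in fact $B^{T}\Theta B=B^{T}$ and $B^{T}\Theta^{T}B=B$ — but since only their sum enters, the argument is unaffected.)
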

\begin{proof}
(1) 
Based on any given real sequence $\{w_k\}_{k=3}^n$, the sequence $\{v_k\}_{k=3}^n$ can be defined using the discrete BDF3 kernels $b_{n-k}$
\begin{align*}
v_k=-\frac{1}{b_0}\sum_{j=3}^{k-1}b_{k-j}w_j+\frac{w_k}{b_0},\qquad\text{for}\quad k\ge 3,
\end{align*}
which is equivalent to
\begin{align}\label{eq:v_k}
	w_k=\sum_{j=3}^{k}b_{k-j}v_j,\qquad\text{for}\quad k\ge 3.
\end{align}
Multiplying the DOC kernels $\vartheta_{n-k}$ on both sides of the formula\eqref{eq:v_k} and summing k form 3 to $n$, it yields
\begin{align}\label{eq:w_k}
	\sum_{k=3}^{n}\vartheta_{n-k}w_k=\sum_{k=3}^{n}\vartheta_{n-k}\sum_{j=3}^{k}b_{k-j}v_j=\sum_{j=3}^{n}v_j\sum_{k=j}^{n}\vartheta_{n-k}b_{k-j}=v_n, \qquad\text{for}\quad n\ge 3.
\end{align}
In turn, a combination of \eqref{eq:v_k} and \eqref{eq:w_k} leads to the proof of \eqref{ineq:the postive defination of DOC} 
\begin{align*}
	\sum_{k=3}^{n}w_k\sum_{j=3}^{k}\vartheta_{k-j}w_j=\sum_{k=3}^{n}v_k\sum_{j=3}^{k}b_{k-j}v_j \ge 0,\qquad\text{for}\quad n\ge 3,
\end{align*}
in which Lemma \ref{b-scalinng} is applied.\\
(2) Identity \eqref{def:DOC identity} indicates that
\begin{flalign}\label{n=k}
&\text{when}\,n=k,\qquad\quad\ \ \vartheta_{0}b_0=1, &\text{for}\quad k\ge 3,\qquad \\
\label{n=k+1}&\text{when}\,n=k+1,\qquad\vartheta_{1}b_0+\vartheta_{0}b_1=0, &\text{for}\quad k\ge 3,\qquad \\
\label{n=k+2}&\text{when}\,n\ge k+2,\qquad\vartheta_{n-k}b_0+\vartheta_{n-k-1}b_1+\vartheta_{n-k-2}b_2=0, &\text{for}\quad k\ge 3.\qquad
\end{flalign}
Substituting BDF3 kernels $b_{n-k}$ into \eqref{n=k}-\eqref{n=k+2}, the recursive formula  becomes available
\begin{align}\label{eq:the recursive formula of theta}
11\vartheta_{n-k}-7\vartheta_{n-k-1}+2\vartheta_{n-k-2}=0, \quad \text{for}\; 3\le k\le n-2,
\end{align}
where $\vartheta_{0}=\frac{6\tau}{11}$ and $\vartheta_{1}=\frac{42\tau}{121}$.\\
The explicit formula of $\{\vartheta_{n-k}\}_{k=3}^n$ is obtained by the characteristic equation method 
\begin{align}\label{characteristic equation}
&\vartheta_{n-k}=\frac{\vartheta_1-p\vartheta_0}{q-p}q^{n-k}+\frac{\vartheta_1-q\vartheta_0}{p-q}p^{n-k}, &\text{for}\quad 3\le k\le n, 
\end{align}
where $p$ and $q$ are two distinct roots of the characteristic equation $11\lambda^2
-7\lambda+2=0$. Furthermore, taking absolute values for $\vartheta_{n-k}$, it leads to
\begin{align*}
|\vartheta_{n-k}| &=\left|\frac{6\tau}{11}\left[ \dfrac{39+7\sqrt{39}i}{78}\left(\dfrac{7-\sqrt{39}i}{22} \right)^{n-k}+\dfrac{39-7\sqrt{39}i}{78}\left(\dfrac{7+\sqrt{39}i}{22} \right)^{n-k} \right]\right|\\
&\le\frac{12\tau}{11} \sqrt{(\frac{39}{78})^2+(\frac{7\sqrt{39}}{78})^2}\left(\sqrt{(\frac{7}{22})^2+(\frac{\sqrt{39}}{22})^2}\right)^{n-k} \\
&=\tau \sqrt{\frac{96}{143}}\left( \frac{2}{11}\right)^{\frac{n-k}{2}} \\
&\le \left( \frac{2}{11}\right)^{\frac{n-k}{2}}\tau, \qquad\qquad\qquad\text{for}\quad 3\le k \le n.
\end{align*}
Summing $k$ from 3 to $n$ separately for the both sides of the above inequality, it yields
\begin{align*}
\sum_{k=3}^{n}|\vartheta_{n-k}|&\le \tau\sum_{k=3}^{n}\left( \frac{2}{11}\right)^{\frac{n-k}{2}} \le \tau\sum_{k=3}^{\infty}\left( \frac{2}{11}\right)^{\frac{n-k}{2}}\le\frac{\tau}{1-\sqrt{\frac{2}{11}}} \le \frac{22}{9}\tau.
\end{align*}
It completes the proof.
\end{proof} 

\subsection{Convergence analysis}

\zhur{For the regularity of the exact solution $U^n,$ suppose there exists a constant $c_1>0$ such that $\{\|U^n\|_\infty, \|\partial_t U^n\|_\infty\}\le c_1,$ where $c_1$ is a constant
independent of the spatial step $h$ and the time step $\tau.$}
\begin{lemma}{\rm\cite{Liao_MA2020}}\label{lem: Gronwall inequality}
Let $\lambda\geq0,$ the sequences $\{p_k\}_{k=1}^N$ and $\{V_k\}_{k=1}^N$ be nonnegative. If the sequence satisfies
$$V_n\le \lambda\sum_{j=1}^{n-1}\tau_jV_j+\sum_{j=1}^{n}p_j,~~for~ 1\le n\le N,$$
then it holds that
$$V_n\le \exp(\lambda t_{n-1})\sum_{j=1}^{n}p_j, ~~for~ 1\le n\le N.$$
\end{lemma}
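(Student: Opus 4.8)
The plan is to prove this by induction on $n$, using only the elementary inequality $1+x\le e^{x}$ (equivalently $e^{x}-1\ge x$) for $x\ge 0$ as analytic input, together with the conventions $t_{0}=0$, $t_{k}=\sum_{j=1}^{k}\tau_{j}$, and empty sums equal to zero. First I would abbreviate $S_{n}:=\sum_{j=1}^{n}p_{j}$, so that $\{S_{n}\}$ is nonnegative and nondecreasing and the hypothesis reads $V_{n}\le\lambda\sum_{j=1}^{n-1}\tau_{j}V_{j}+S_{n}$. For $n=1$ the sum on the right is empty, hence $V_{1}\le S_{1}=e^{\lambda t_{0}}S_{1}$, which is the base case.

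For the inductive step, assume $V_{j}\le e^{\lambda t_{j-1}}S_{j}$ for all $1\le j\le n-1$. Substituting into the hypothesis and using $S_{j}\le S_{n}$ for $j\le n$,
\[
V_{n}\le\lambda\sum_{j=1}^{n-1}\tau_{j}V_{j}+S_{n}\le S_{n}\Bigl(1+\lambda\sum_{j=1}^{n-1}\tau_{j}e^{\lambda t_{j-1}}\Bigr).
\]
It then suffices to bound the bracket by $e^{\lambda t_{n-1}}$. From $e^{\lambda\tau_{j}}-1\ge\lambda\tau_{j}$ and $t_{j}=t_{j-1}+\tau_{j}$ we get, for each $j$, $\lambda\tau_{j}e^{\lambda t_{j-1}}\le e^{\lambda t_{j-1}}\bigl(e^{\lambda\tau_{j}}-1\bigr)=e^{\lambda t_{j}}-e^{\lambda t_{j-1}}$; summing telescopically over $j=1,\dots,n-1$ gives $\lambda\sum_{j=1}^{n-1}\tau_{j}e^{\lambda t_{j-1}}\le e^{\lambda t_{n-1}}-e^{\lambda t_{0}}=e^{\lambda t_{n-1}}-1$. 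Hence $V_{n}\le S_{n}e^{\lambda t_{n-1}}$, closing the induction.

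There is essentially no substantive obstacle here: this is a standard discrete Grönwall estimate, and the only things needing care are bookkeeping — the empty-sum convention, the fact that the sum on the right runs only to $n-1$ (so the term $V_{n}$ is genuinely absorbed and the recursion is not circular), and arranging the telescoping so that the surviving exponent is exactly $\lambda t_{n-1}$ and not $\lambda t_{n}$. An equivalent route, which I would mention as an alternative, is to introduce the majorant $W_{n}:=\lambda\sum_{j=1}^{n-1}\tau_{j}V_{j}+S_{n}\ge V_{n}$, observe $W_{n+1}\le(1+\lambda\tau_{n})W_{n}+p_{n+1}\le e^{\lambda\tau_{n}}W_{n}+p_{n+1}$, iterate down to $W_{1}=p_{1}$, and finally bound each factor $e^{\lambda(t_{n-1}-t_{j-1})}\le e^{\lambda t_{n-1}}$ using $t_{j-1}\ge 0$; this yields the same conclusion $V_{n}\le e^{\lambda t_{n-1}}\sum_{j=1}^{n}p_{j}$.
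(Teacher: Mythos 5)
Your proof is correct. Note that the paper itself gives no proof of this lemma --- it is quoted directly from the cited reference \cite{Liao_MA2020} --- so there is no in-paper argument to compare against; your induction, using $1+x\le e^{x}$ together with the telescoping bound $\lambda\sum_{j=1}^{n-1}\tau_{j}e^{\lambda t_{j-1}}\le e^{\lambda t_{n-1}}-1$, is the standard self-contained derivation of this discrete Gronwall estimate, and the bookkeeping (empty-sum convention at $n=1$, the sum stopping at $n-1$ so the recursion is not circular, the exponent landing on $t_{n-1}$ rather than $t_{n}$) is all handled properly.
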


Now we are ready to show the convergence analysis of the BDF3 difference scheme\eqref{eq: fully BDF3 implicit scheme1}.
Denote the error $ e_h^n=U_h^n-u_h^n,~{ \mathrm{x}}_h\in\bar{\Omega}_h,~ 0\le n\le N.$
The error equations is shown as follows
 \begin{align}\label{error scheme}
 &{ D}_3e_h^n+(1+\Delta_h)^2e_h^n+\big(f(U_h^n)-f(u_h^n)\big)=\xi_h^n+\eta_h^n,~~{ \mathrm{x}}_h\in\bar{\Omega}_h,~1\le n\le N, 
 \end{align}
with $e_h^0=0, ~{ \mathrm{x}}_h\in \bar{\Omega}_h,$ where $\xi_h^n$ and $\eta_h^n$ denote the local consistency error in time and space, respectively.
 \begin{theorem}\label{th3.2}
Suppose the problem (\ref{def:the SH equation}) has a unique smooth solution and $u_h^n\in\mathcal{V}_h$ is the solution of the difference scheme  (\ref{eq: fully BDF3 implicit scheme1}). If the time step $\tau$ satisifies the condition that
\begin{align}\label{the condition of tau}
	\tau\le \frac{9}{88\rho},
\end{align}
then, the numerical scheme is convergent in $L^2$ norm,
$$ \|e^k\|\le {C}\bra{\tau^3+h^2},$$
where $\rho =c_1^2+c_1c_0+c_0^2+\mathrm{g}(c_0+c_1)+\epsilon,$ C is a positive constant independent of the time steps $\tau$ and the space length $h$.
\end{theorem}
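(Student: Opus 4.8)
The plan is to use the DOC kernels $\vartheta_{n-k}$ to convert the four-level BDF3 operator $D_3$ in the error equation \eqref{error scheme} into a telescoping gradient form, then close the argument with the discrete Gronwall inequality (Lemma \ref{lem: Gronwall inequality}). First I would record the consistency errors: by Taylor expansion and the regularity assumption $\{\|U^n\|_\infty,\|\partial_tU^n\|_\infty\}\le c_1$, the temporal truncation error satisfies $\|\xi^n\|\le C\tau^3$ for $n\ge3$ (and $\|\xi^1\|,\|\xi^2\|\le C\tau^2$ from the special starting schemes \eqref{def:u^1}--\eqref{def:u^2}, which is consistent with the claimed order once one notes these contribute only $O(\tau)$ terms after one more power of $\tau$ is gained, or one simply absorbs them), while the second-order central differencing gives $\|\eta^n\|\le Ch^2$. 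Then, multiplying \eqref{error scheme} by $\vartheta_{n-k}$ and summing $k$ from $3$ to $n$, the discrete orthogonality \eqref{def:DOC identity} collapses $\sum_{k=3}^n\vartheta_{n-k}D_3e^k = \sum_{k=3}^n\vartheta_{n-k}\sum_{j=1}^k b_{k-j}\diff e^j$ into $\diff e^n$ plus lower-level starting contributions involving $e^1,e^2$; I would take the inner product of the resulting identity with $e^n$.

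Next I would estimate each term after taking $\langle\cdot,e^n\rangle$. The principal term gives $\langle\diff e^n,e^n\rangle\ge\frac12(\|e^n\|^2-\|e^{n-1}\|^2)$. The elliptic term $\sum_{k=3}^n\vartheta_{n-k}\langle(1+\Delta_h)^2e^k,e^n\rangle$ is handled by writing $\langle(1+\Delta_h)^2e^k,e^n\rangle=\langle(1+\Delta_h)e^k,(1+\Delta_h)e^n\rangle$ and using the positive-definiteness property (I) of the DOC kernels in Lemma \ref{lem: DOC property}, applied to the sequence $\{(1+\Delta_h)e^k\}$ — after handling the $n=k$ diagonal separately this yields a nonnegative contribution (up to a $\vartheta_0\|(1+\Delta_h)e^n\|^2\ge0$ term), so it can be dropped. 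For the nonlinear term I would use the $L^\infty$ bounds: since $f(U^n)-f(u^n)=(U^n)^3-(u^n)^3-\mathrm g((U^n)^2-(u^n)^2)-\epsilon(U^n-u^n)$ factors through $e^n$ times a polynomial in $U^n,u^n$ bounded by $\rho=c_1^2+c_1c_0+c_0^2+\mathrm g(c_0+c_1)+\epsilon$ (using Lemma \ref{lem:Bound-Solution} for $\|u^n\|_\infty\le c_0$), we get $\|f(U^k)-f(u^k)\|\le\rho\|e^k\|$, hence $|\sum_{k=3}^n\vartheta_{n-k}\langle f(U^k)-f(u^k),e^n\rangle|\le\rho\big(\sum_{k=3}^n|\vartheta_{n-k}|\,\|e^k\|\big)\|e^n\|\le\rho\cdot\frac{22}{9}\tau\max_k\|e^k\|\cdot\|e^n\|$ via the kernel sum bound in \eqref{ineq:the above bound of DOC}. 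The right-hand side contributes $|\sum_{k=3}^n\vartheta_{n-k}\langle\xi^k+\eta^k,e^n\rangle|\le\frac{22}{9}\tau(C\tau^3+Ch^2)\|e^n\|$ by the same bound.

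Collecting everything and dividing by $\|e^n\|$ (or by $\max_{3\le k\le n}\|e^k\|$, choosing the index achieving the max), I would arrive at an inequality of the form $\|e^n\|^2\le\|e^{n-1}\|^2+C\rho\tau\sum_{k\le n}\|e^k\|^2+C\tau(\tau^3+h^2)^2+(\text{starting errors from }e^1,e^2)$; summing in $n$ and absorbing the $\|e^n\|^2$ term on the left (this is where the condition $\tau\le\frac{9}{88\rho}$ enters, since $\frac{22}{9}\rho\tau\le\frac14$ keeps the coefficient controllable), I get $\|e^n\|^2\le\lambda\sum_{j<n}\tau\|e^j\|^2+\sum_j p_j$ with $p_j=O(\tau(\tau^3+h^2)^2)$, and Lemma \ref{lem: Gronwall inequality} yields $\|e^n\|\le C(\tau^3+h^2)$ after taking square roots, the factor $\exp(\lambda t_{n-1})\le\exp(\lambda T)$ being absorbed into $C$. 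The main obstacle I expect is the bookkeeping of the starting values $e^1,e^2$: the DOC reduction is clean for $k\ge3$, but the BDF3 operator acting at levels $1,2$ uses the modified formulas \eqref{def:u^1}--\eqref{def:u^2}, so one must separately show $\|e^1\|,\|e^2\|\le C(\tau^3+h^2)$ (or at least small enough that they do not pollute the global order) — this likely requires a short independent energy estimate for the one- and two-step starters, carefully matching the $O(\tau^2)$ local errors there against the extra $\tau$ gained in the global accumulation, together with verifying that the DOC sum $\sum_{k=3}^n\vartheta_{n-k}b_{k-j}$ for $j\in\{1,2\}$ produces only exponentially decaying weights so those early errors are damped.
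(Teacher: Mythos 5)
Your overall strategy is the same as the paper's: reduce the BDF3 operator via the DOC kernels and the orthogonality identity \eqref{def:DOC identity} to a two-level difference plus exponentially weighted starting contributions from $e^1,e^2$; bound the nonlinearity through $\|f^l\|_\infty\le\rho$ using Lemma \ref{lem:Bound-Solution}; use the kernel bounds \eqref{ineq:the above bound of DOC} and the discrete Gronwall inequality; and treat $n=1,2$ by separate one-level energy estimates in which the division by $b_0^{(n)}\sim\tau^{-1}$ upgrades the $O(\tau^2)$ local starting errors to $O(\tau^3)$ globally. All of these ingredients match the paper's proof.

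There is, however, one step that would fail as you have written it: the treatment of the elliptic term. At a \emph{fixed} level $n$, the quantity $\sum_{k=3}^{n}\vartheta_{n-k}\myinner{(1+\Delta_h)e^k,(1+\Delta_h)e^n}$ is \emph{not} nonnegative, and it does not split into "$\vartheta_0\|(1+\Delta_h)e^n\|^2$ plus something nonnegative." The DOC kernels change sign (e.g.\ $\vartheta_0,\vartheta_1,\vartheta_2,\vartheta_3>0$ but $\vartheta_4<0$ by the recursion \eqref{eq:the recursive formula of theta}), so a single row of the quadratic form has no sign, and you cannot control the off-diagonal terms by $\|e^k\|$ without an inverse inequality that would cost negative powers of $h$. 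Property (I) of Lemma \ref{lem: DOC property} is a statement about the \emph{full double sum} $\sum_{k=3}^{n}\sum_{l=3}^{k}\vartheta_{k-l}\myinner{(1+\Delta_h)e^l,(1+\Delta_h)e^k}\ge0$; this is exactly why the paper takes the inner product of the DOC-reduced equation at level $k$ with $e^k$ and sums $k$ from $3$ to $n$ \emph{before} discarding the elliptic contribution (see \eqref{inner eq} and \eqref{error-second}). Since you do eventually sum in $n$, the fix is purely one of ordering — keep the elliptic term until after the outer summation and only then invoke positive definiteness — but as stated, dropping it level by level is not justified. The remainder of your argument (the $n_0$/maximum trick versus your squared Gronwall form, the $\frac{22}{9}\tau$ kernel-sum bound feeding the smallness condition $\tau\le\frac{9}{88\rho}$, and the damped starting-value weights $\vartheta_{k-3}b_2$, $\vartheta_{k-3}b_1$, $\vartheta_{k-4}b_2$) is consistent with the paper.
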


\begin{proof} 
For $n=1$, taking the discrete inner product with \eqref{error scheme} by $e^1$ leads to 
\begin{align}\label{e1}
	\myinnerb{D_3e^1,e^1}+\myinnerb{(1+\Delta_h)^2e^1,e^1}+\myinnerb{(f(U^1)-f(u^1),e^1)}=\myinnerb{\xi^1+\eta^1,e^1},
\end{align}
where $\xi_h^1=O(\tau^2)$, $\eta_h^1=O(h^2).$ 

For the nonlinear term in (\ref{e1}), denote $$f_h^1=(U_h^1)^2+U_h^1u_h^1+(u_h^1)^2-\mathrm{g}(U_h^1+u_h^1)-\epsilon,$$ then it yields that $f(U_h^1)-f(u_h^1)=f_h^1e_h^1.$ It follows from Lemma \ref{lem:Bound-Solution} and  $\|U^n\|_\infty \le c_1$ that $$\|{ f^1}\|_\infty\le \rho ,$$ where $\rho =c_1^2+c_1c_0+c_0^2+\mathrm{g}(c_0+c_1)+\epsilon.$	

Noticing  $\myinnerb{(1+\Delta_h)^2e^1,e^1}=\|(1+\Delta)e^1\|^2 \ge 0$ with $e^0=0$, and using Cauchy-Schwarz inequality, \eqref{e1} can be rewritten as
\begin{align*}
	\frac{2}{\tau}\|e^1\|^2-\rho \|e^1\|^2 \le (\|\xi^1\|+\|\eta^1\|)\|e^1\|,
\end{align*}
that is 
\begin{align*}
	(2-\rho\tau)\|e^1\|\le \tau(\|\xi^1\|+\|\eta^1\|).
\end{align*}
In view of the condition \eqref{the condition of tau}, it yields
\begin{align}\label{e11}
	\|e^1\|\le C_1(\tau^3+h^2),
\end{align}
where $C_1$ is a positive constant independent of $\tau$ and $h$.

While for $n=2$, taking the inner product of \eqref{error scheme} with $e^2$, we obtain
\begin{align}\label{e2}
	\myinnerb{D_3e^2,e^2}+\myinnerb{(1+\Delta)^2e^2,e^2}+\myinnerb{(f(U^1)-f(u^2),e^2)}=\myinnerb{\xi^2+\eta^2,e^2},
\end{align}
where $\xi_h^2=O(\tau^2)$, $\eta_h^2=O(h^2).$ 
An application of Cauchy-Schwarz inequality for the first term on the left hand side directly leads to the following inquality
\begin{align*}
	\myinnerb{D_3e^2,e^2}=\frac{3}{2\tau}\myinnerb{e^2,e^2}-\frac{2}{\tau}\myinnerb{e^1,e^2} \ge \frac{3}{2\tau}\|e^2\|^2-\frac{2}{\tau}\|e^1\|\|e^2\|.
\end{align*}
Similar to the case of $n = 1$, equation \eqref{e2} can be rewritten as
\begin{align*}
	(\frac{3}{2}-\rho\tau)\|e^2\|\le 2\|e^1\|+\tau(\|\xi^1\|+\|\eta^1\|).
\end{align*}
According to the condition \eqref{the condition of tau}, the following convergence estimate is obtained
\begin{align}\label{e22}
	\|e^2\|\le C_2(\tau^3+h^2),
\end{align}
where $C_2$ is a positive constant independent of $\tau$ and $h$.

For $n \ge 3$, replacing $n$ by $l$ in (\ref{error scheme}) and multiplying both sides of (\ref{error scheme}) by the DOC kernels $\vartheta_{k-l},$ and summing $l$ from 3 to $k,$ for ${ \mathrm{x}}_h\in\bar{\Omega}_h,$ we have the following identity for $ k \ge 3$
\begin{align}
\sum_{l=3}^k\vartheta_{k-l}D_3e_h^l+\sum_{l=3}^k\vartheta_{k-l}(1+\Delta_h)^2 e_h^l
-\sum_{l=3}^k\vartheta_{k-l}\brab{f(U_h^l)-f(u_h^l)}=P_h^k+\zhur{Q_h^k}, \label{Error-Equation-DOC}
\end{align}
where $P_h^k=\sum\limits_{l=3}^k\vartheta_{k-l}\xi_h^l$ and $\zhur{Q_h^k}=\sum\limits_{l=3}^k\vartheta_{k-l}\eta_h^l.$

It follows from Lemma \ref{lem: DOC property} (II) and $\xi_h^l=O(\tau^3), \eta_h^l=O(h^2)$ that
\begin{align}
\sum_{k=3}^n\|P^k\| &\le \frac{22}{9}t_n\tau^3 \le  C_3\tau^3,\;\;{\rm for}\; 3\le n\le N,\label{Pn}\\
\sum_{k=3}^n\|\zhur{Q^k}\| &\le \frac{22}{9}t_n\tau^3 \le C_4h^2,\;\;{\rm for}\; 3\le n\le N,\label{Qn}
\end{align}
where $C_3, C_4$ are positive constants independent of $\tau$ and $h$.

\zhur{Recalling} the orthogonal identity \eqref{def:DOC identity}, it yields
\begin{align*}
\sum_{l=3}^k\vartheta_{k-l}D_3e_h^l&=\sum_{l=3}^k\vartheta_{k-l}\sum_{j=1}^lb_{l-j}\diff e_h^j
=\sum_{l=3}^k\vartheta_{k-l}\left( \sum_{j=3}^lb_{l-j}\diff e_h^j+b_{l-1}\diff e_h^1+b_{l-2}\diff e_h^2\right) \\
&=\sum_{j=3}^{k}\diff e_h^j\sum_{l=j}^{k}\vartheta _{k-l}b_{l-j}+\sum_{l=3}^k\vartheta_{k-l}b_{l-1}\diff e_h^1+\sum_{l=3}^k\vartheta_{k-l}b_{l-2}\diff e_h^2\\
&=\diff e_h^k+\sum_{l=3}^k\vartheta_{k-l}b_{l-1}\diff e_h^1+\sum_{l=3}^k\vartheta_{k-l}b_{l-2}\diff e_h^2, \qquad\text{for} ~3\le k\le n,
\end{align*}
where the orthogonal identy \eqref{def:DOC identity} is applied.\\
Taking the inner product of (\ref{Error-Equation-DOC}) with $e^k$ and summing up $k$ from $3$ to $n$, it yields
\begin{align}
 &\sum_{k=3}^n\myinnerb{\nabla_\tau e^k, e^k}+\sum_{k=3}^n\sum_{l=3}^k\vartheta_{k-l}^{(k)}\myinnerb{(1+\Delta_h)^2e^l, e^k}
+\sum_{k=3}^n\sum_{l=3}^k\vartheta_{k-l}^{(k)}\myinnerb{f(U^l)-f(u^l), e^k}\nonumber\\
&=-\sum_{k=3}^{n}\sum_{l=3}^k\vartheta_{k-l}b_{l-1}\myinnerb{\diff e^1,e^k}-\sum_{k=3}^{n}\sum_{l=3}^k\vartheta_{k-l}b_{l-2}\myinnerb{\diff e^2,e^k}+\sum_{k=3}^n\myinnerb{P^k+\zhur{Q^k}, e^k}.\label{inner eq}
\end{align}
\zhur{In the following, each term in \eqref{inner eq} will be estimated rigorously.
For the first term on the left hand side, the following identity holds}
\begin{align}
\sum_{k=3}^n\myinnerb{\nabla_\tau e^k, e^k}=\frac{1}{2}(\|e^n\|^2-\|e^2\|^2+\sum_{k=3}^n\|\diff e^k\|^2).\label{error-first}
\end{align}
For the second \zhur{one} on the left hand side, with the application of the positive definiteness of the discrete kernels $\vartheta_{n-k}$ in \eqref{ineq:the postive defination of DOC},  we have
\begin{align}\label{error-second}
\sum_{k=3}^n\sum_{l=3}^k\vartheta_{k-l}\myinnerb{(1+\Delta_h)^2e^l, e^k}
=\sum_{k=3}^n\sum_{l=3}^k\vartheta_{k-l}\myinnerb{(1+\Delta_h)e^l, (1+\Delta_h)e^k}\ge0.
\end{align}
For the third nonlinear term on the left-hand side, we begin with a estimate
$$f(U_h^l)-f(u_h^l)=f_h^le_h^l,$$
where we denote $$f_h^l=(U_h^l)^2+U_h^lu_h^l+(u_h^l)^2-\mathrm{g}(U_h^l+u_h^l)-\epsilon.$$ 
It follows from Lemma \ref{lem:Bound-Solution} that $$\|{ f^l}\|_\infty\le \rho ,$$ where $\rho=c_1^2+c_1c_0+c_0^2+\mathrm{g}(c_0+c_1)+\epsilon.$
Consequently, it yields
\begin{align}\label{error-third}
\sum_{k=3}^n\sum_{l=3}^k\vartheta_{k-l}\myinnerb{f(U^l)-f(u^l), e^k}
=\sum_{k=3}^n\sum_{l=3}^k\vartheta_{k-l}\myinnerb{f^le^l, e^k}\le \rho\sum_{k=3}^n \|e^k\|\sum_{l=3}^k|\vartheta_{k-l}|\|e^l\|.
\end{align}

 Noticing $b_j=0$ when $j\ge 3,$ the first term on the right hand side of (\ref{inner eq}) gives
\begin{align}\label{error-forth}
-\sum_{k=3}^{n}\sum_{l=3}^k\vartheta_{k-l}b_{l-1}\myinnerb{\diff e^1,e^k}&=-\sum_{k=3}^{n}\vartheta_{k-3}b_{2}\myinnerb{\diff e^1,e^k}\le \frac{1}{3\tau}\sum_{k=3}^{n}|\vartheta_{k-3}|\|\diff e^1\|\|e^k\|,
\end{align}
where Cauchy-Schwarz inequality is used. Similarly, the second term on the right hand side of (\ref{inner eq}) can be estimated as
\begin{align}\label{error-fifth}
	-\sum_{k=3}^{n}\sum_{l=3}^k\vartheta_{k-l}b_{l-2}\myinnerb{\diff e^2,e^k}&=-\sum_{k=3}^{n}\vartheta_{k-3}b_{1}\myinnerb{\diff e^2,e^k}-\sum_{k=4}^{n}\vartheta_{k-4}b_{2}\myinnerb{\diff e^2,e^k}\nonumber\\
	&\le \frac{7}{6\tau}\sum_{k=3}^{n}|\vartheta_{k-3}|\|\diff e^2\|\|e^k\|+\frac{1}{3\tau}\sum_{k=4}^{n}|\vartheta_{k-4}|\|\diff e^2\|\|e^k\|.
\end{align}
Substituting (\ref{error-first})-(\ref{error-fifth}) into (\ref{inner eq}), one arrives at
\begin{align}
&\|e^n\|^2-\|e^2\|^2 \le 2\rho\sum_{k=3}^n \|e^k\|\sum_{l=3}^k|\vartheta_{k-l}|\|e^l\|+\frac{2}{3\tau}\sum_{k=3}^{n}|\vartheta_{k-3}|\|\diff e^1\|\|e^k\|\nonumber\\&\;\;
+\frac{7}{3\tau}\sum_{k=3}^{n}|\vartheta_{k-3}|\|\diff e^2\|\|e^k\|+\frac{2}{3\tau}\sum_{k=4}^{n}|\vartheta_{k-4}|\|\diff e^2\|\|e^k\|
+2\sum_{k=3}^n \|e^k\|(\|P^k\|+\|\zhur{Q^k}\|),\label{error-inequality}
\end{align}
where the last item is obtained by Cauchy-Schwarz inequality.

\zhur{There exists an} integer $n_0$ such that $\|e^{n_0}\|=\max\limits_{0\le k\le n}\|e^k\|.$ Taking $n=n_0$ in (\ref{error-inequality}), we have
\begin{align*}
\|e^{n_0}\|&\le \|e^2\|+2\rho\sum_{k=3}^{n_0} \|e^k\|\sum_{l=3}^k|\vartheta_{k-l}|+\frac{2}{3\tau}\sum_{k=3}^{n_0}|\vartheta_{k-3}|\|\diff e^1\|\nonumber\\
&+\frac{7}{3\tau}\sum_{k=3}^{n_0}|\vartheta_{k-3}|\|\diff e^2\|+\frac{2}{3\tau}\sum_{k=4}^{n_0}|\vartheta_{k-4}|\|\diff e^2\|
+2\sum_{k=3}^{n_0}(\|P^k\|+\|\zhur{Q^k}\|), 
\end{align*}
\zhur{When} $\tau\le \frac{9}{88\rho}$, it follows from Lemma \ref{lem: DOC property} { (II)} that
\begin{align}
\|e^n\|\le\|e^{n_0}\|&\le 2\|e^2\|+\frac{88}{9}\rho\tau\sum_{k=3}^{n-1}\|e^k\|+\frac{88}{27}\|\diff e^1\|\nonumber\\ &\quad+\frac{44}{3}\|\diff e^2\|
+4\sum_{k=3}^{n}(\|P^k\|+\|\zhur{Q^k}\|), ~~1\le n\le N.
\end{align}
 Applying {Lemma \ref{lem: Gronwall inequality}, and combining the estimates \eqref{e11} and \eqref{e22} leads to} the following overall estimation
\begin{align*}
\|e^n\|&\le 2\exp(\frac{88}{9}\rho t_{n-1})\braB{\|e^2\|+\frac{44}{27}\|\diff e^1\|+\frac{22}{3}\|\diff e^2\|+2\sum_{k=3}^{n}(\|P^k\|+\|\zhur{Q^k}\|)}\\
&\le 2\exp(\frac{88}{9}\rho t_{n-1})\braB{\|e^2\|+\frac{44}{27}\|e^1\|+\frac{22}{3}\sqrt{2}(\|e^2\|+\|e^1\|)+2C_3\tau^3+2C_4h^2}\\
&\le  C(\tau^3+h^2),\qquad\qquad\qquad\qquad\qquad 3\le n\le N.
\end{align*}
where C is a positive constant independent of the time step $\tau$ and the space length $h$.
\end{proof}

\section{Numerical results}
In this section, numerical examples in 2D and 3D are used to verify the convergence order in time direction, global energy stability and evolution properties of the numerical solution. The $L^2$ norm error between the exact solution $U^N_h$ and the numerical solution $u^N_h$ is denoted by $e(N):=\|U^N-u^N\|,$ and the order of convergence in temporal direction is estimated by {$\mathrm{order=log}(e(N)/e(2N)).$}
 
\begin{example}{(Temporal Accuracy Test)}
	Consider the Swift-Hohenberg equation $u_t+(1+\Delta)^2u+f(u)=g(x,y,t),\;(x,y)\in\Omega,\;0< t\le T,$ in the two-dimensional domain $\Omega=(0,2\pi)^2$ with $T=10$, subjected to the periodic boundary conditions and the initial data
	$$ u(x,y,0)=\sin(2x)\sin(2y),~~(x,y) \in \Omega,$$
	such that it has an exact solution 
	$$u(x,y,t)=\cos(t)\sin(2x)\sin(2y).$$
\end{example}

This example is also used in \cite{sunBDF2} for the numerical test.  The spatial grid points are fixed to be M=10000, and the temporal steps are varied by N=10,20,30,40. Model parameters $\mathrm{g}$ and $\epsilon$ are specified in different cases that (1)$\epsilon=0.25,~\mathrm{g}=1,$ (2)$\epsilon=0.15, ~\mathrm{g}=1$ and (3)$\epsilon=0.25,~\mathrm{g}=0$ to test the convergence of  the method under different conditions. The experimental results are displayed in Table \ref{table:order}, which shows the order of convergence in time direction matches well with the theoretical analysis in Section 4. 

\begin{table}[htb!]
	\begin{center}
	\caption{$L^2$ errors and convergence orders of BDF3 scheme \eqref{eq: fully BDF3 implicit scheme1} with different $\epsilon$ and $\mathrm{g}$. } 
	\vspace*{0.3pt}
	\def\temptablewidth{1\textwidth}
	{\rule{\temptablewidth}{1pt}}
	\begin{tabular*}{\temptablewidth}{@{\extracolsep{\fill}}lllllll}
		\hline
		\multicolumn{1}{c}{}  & \multicolumn{2}{l}{$\epsilon$=0.25, $g=$1} & \multicolumn{2}{l}{$\epsilon$=0.15, $\mathrm{g}$=1} & \multicolumn{2}{l}{$\epsilon$=0.25, $\mathrm{g}$=0} \\ \cline{2-3}\cline{4-5}\cline{6-7} 
		\multicolumn{1}{c}{N} & e(N)              & Order         & e(N)           & Order           & e(N)          & Order          \\ \hline
		10                    & 3.167e-01          & *             & 3.154e-01      & *               &  3.127e-01           &  *	              \\
		20                    & 5.699e-02          & 2.47          & 5.647e-02      & 2.48            &  5.315e-02             & 2.56              \\
		40                    & 7.321e-03          & 2.96          & 7.246e-03      & 2.96            & 6.625e-03             & 3.00              \\
		80                    & 9.311e-04          & 2.98          & 9.309e-04      & 2.96            & 8.443e-04             & 2.97	              \\ \hline
	\end{tabular*}
	{\rule{\temptablewidth}{1pt}}
		\label{table:order}
	\end{center}
\end{table}

\begin{example}{(Evolution of energy and solution in 2D)}
We consider the Swift-Hohenberg equation \eqref{def:the SH equation} on $\Omega=(0,100)^2$ with different parameters $\epsilon$ and $\mathrm{g}$, subjected to the periodic boundary conditions and an initial data\cite{Su_CAM2019}
$$u(x,y,0)=0.1+0.02\cos\left( \frac{\pi x}{100}\right)\sin\left( \frac{\pi y}{100}\right)+0.05\sin\left( \frac{\pi x}{20}\right)\cos\left( \frac{\pi y}{20}\right), ~~~~~(x,y)\in \Omega.$$
\end{example}

\begin{figure}[htb!]
	\centering
	\subfigure[$\mathrm{g}$=0.5]{
		\includegraphics[width=3.0in]{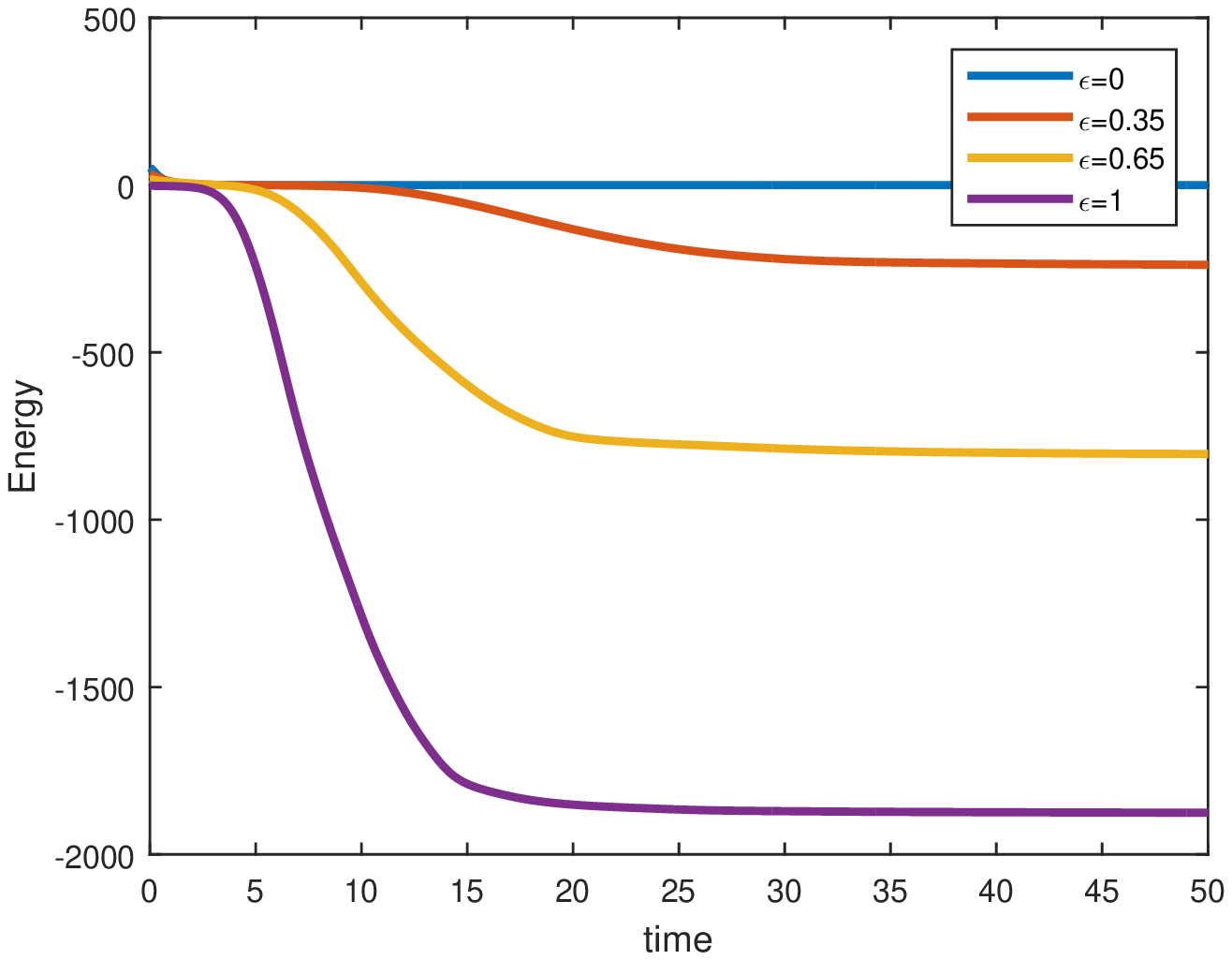}}
	\subfigure[$\epsilon$=0.5]{
		\includegraphics[width=3.0in]{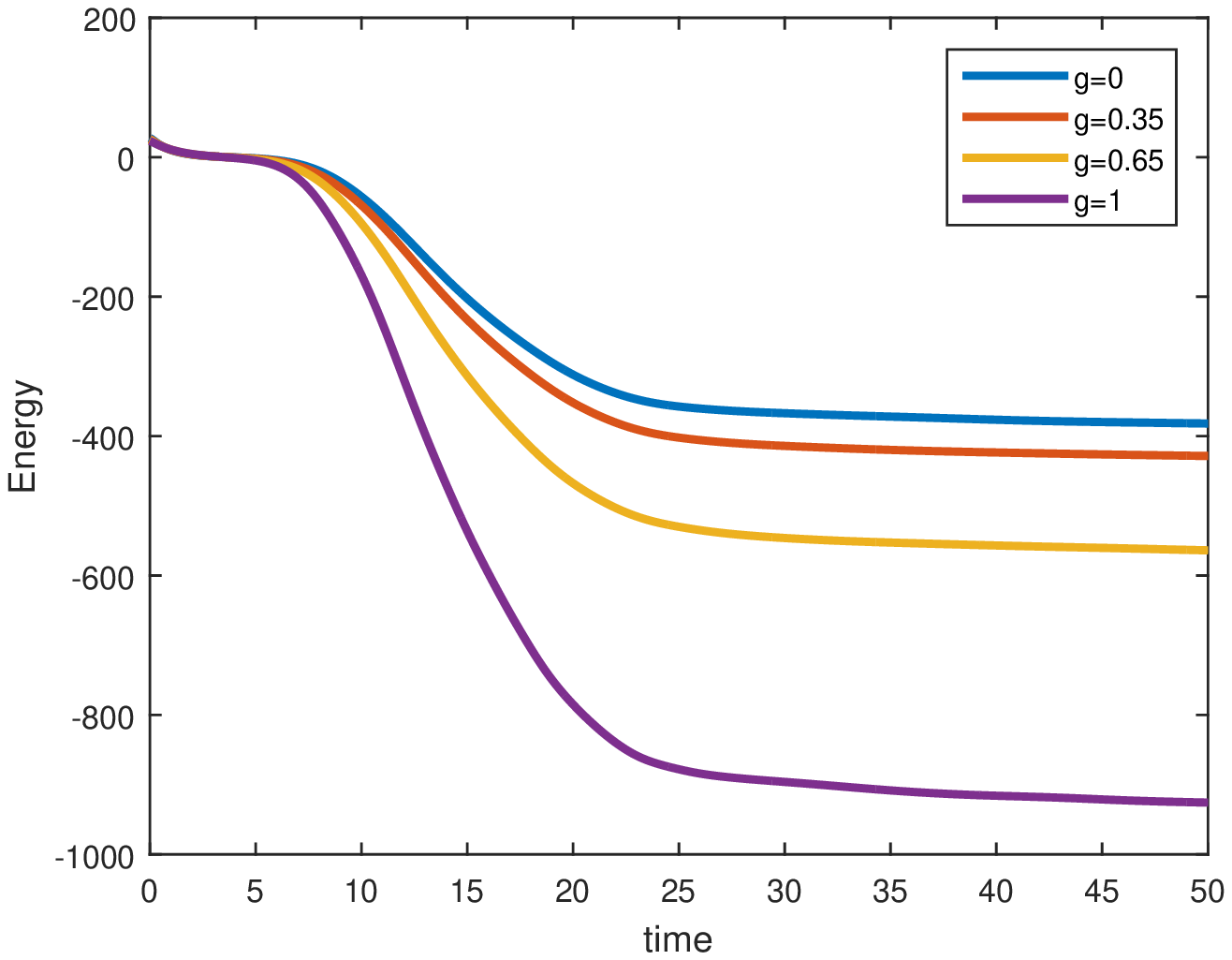}}\\
	\caption{ Evolution of energy with different parameters $\epsilon$ and $\mathrm{g}$.}
	\label{figure-energy_1}
\end{figure}
In this example, we will show the evolution of both the energy and the numerical solution based on spatial meshes $128\times128$ with $\tau=0.1$. The energy evolution in time with $T=50$ for different parameters $\epsilon$ and $\mathrm{g}$ are shown in Figure \ref{figure-energy_1}, from which we see that scheme \eqref{eq: fully BDF3 implicit scheme1} is always energy dissipating for any parameters as tested. What's more, the values of $\epsilon$ and $\mathrm{g}$ appear to manipulate the decay rate of the energy. 

The parameters $\epsilon$ and $\mathrm{g}$ have great impact on the dissipation property according to Figure \ref{figure-energy_1}. For fixed $\tau=0.1$ and $\epsilon=0.25,$ Figure \ref{snapshot-figureg0} and Figure \ref{snapshot-figureg1} show the numerical solutions before stabilization at different time $t=0.1,16,32,64$ with parameters $\mathrm{g}=0$ and $\mathrm{g}=1$ respectively. We find that the evolved patterns of the numerical solution are effectd by the parameter $\mathrm{g}$, which reveals cylindrical patterns when $\mathrm{g}=1$ and curvilinear patterns when $\mathrm{g}=0$ at the saturation time.

\begin{figure}[htb!]
	\centering
	\subfigure[t=0.1]{
		\includegraphics[width=1.2in]{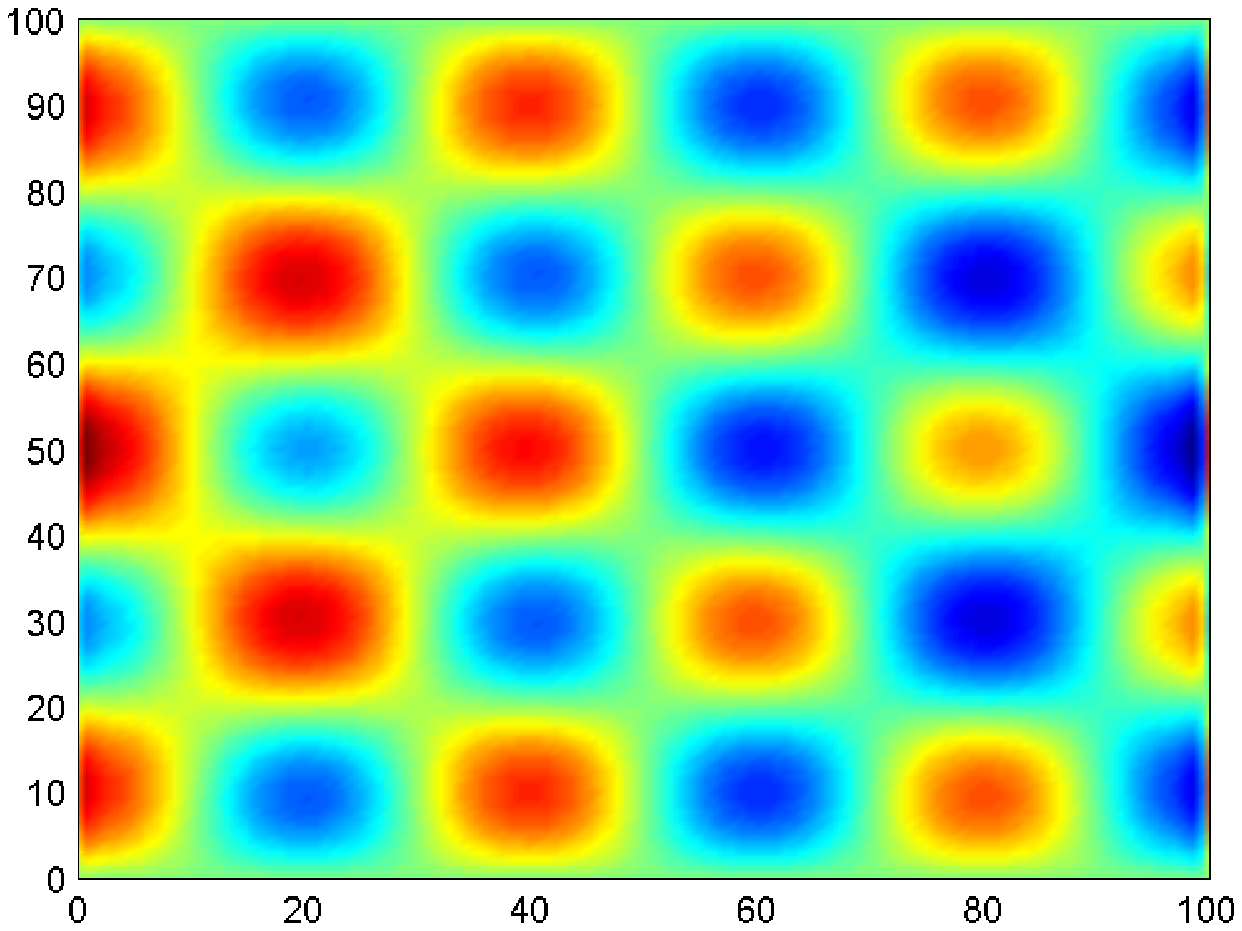}}
	\subfigure[t=16]{
		\includegraphics[width=1.2in]{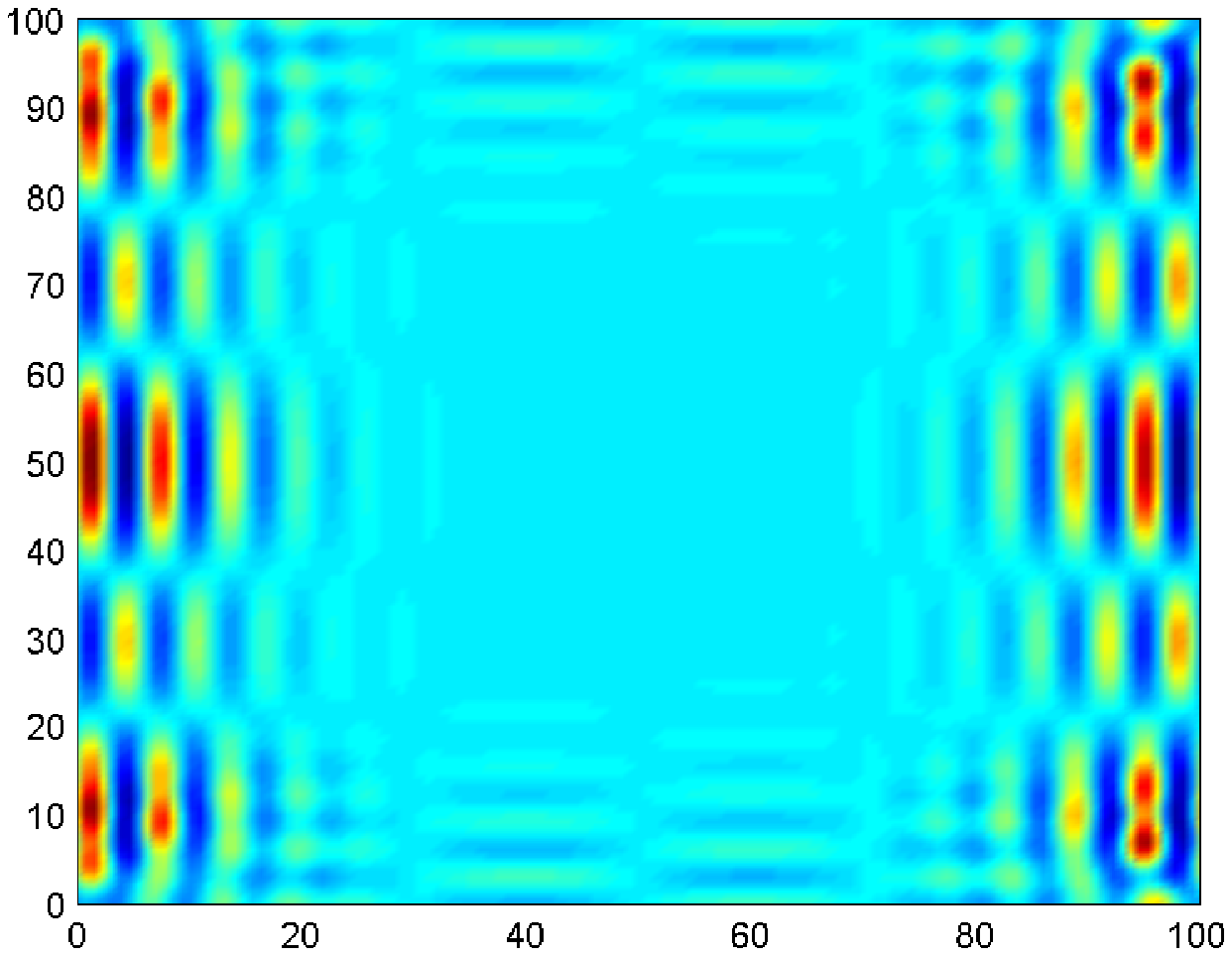}}
	\subfigure[t=32]{
		\includegraphics[width=1.2in]{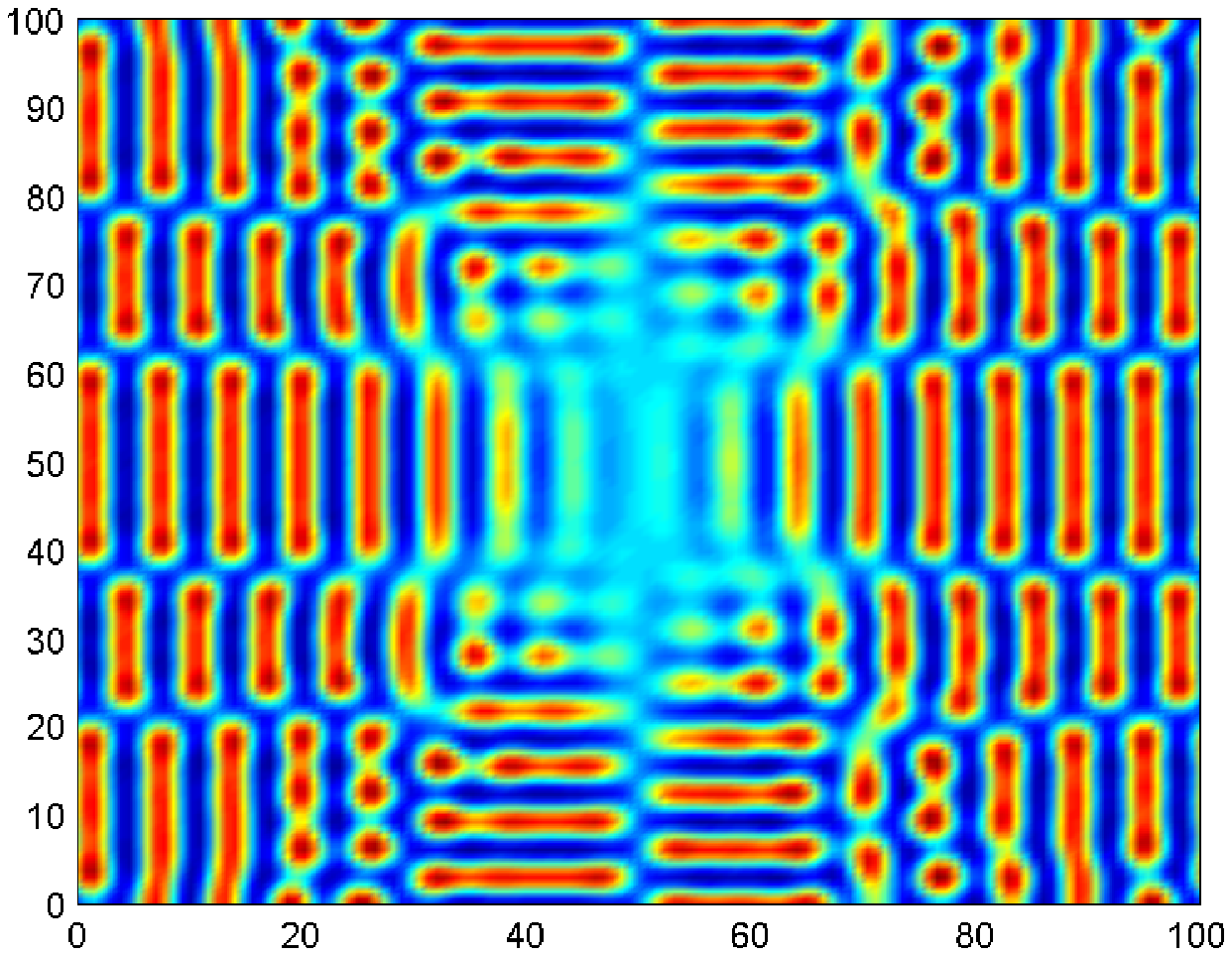}}
	\subfigure[t=64]{
		\includegraphics[width=1.2in]{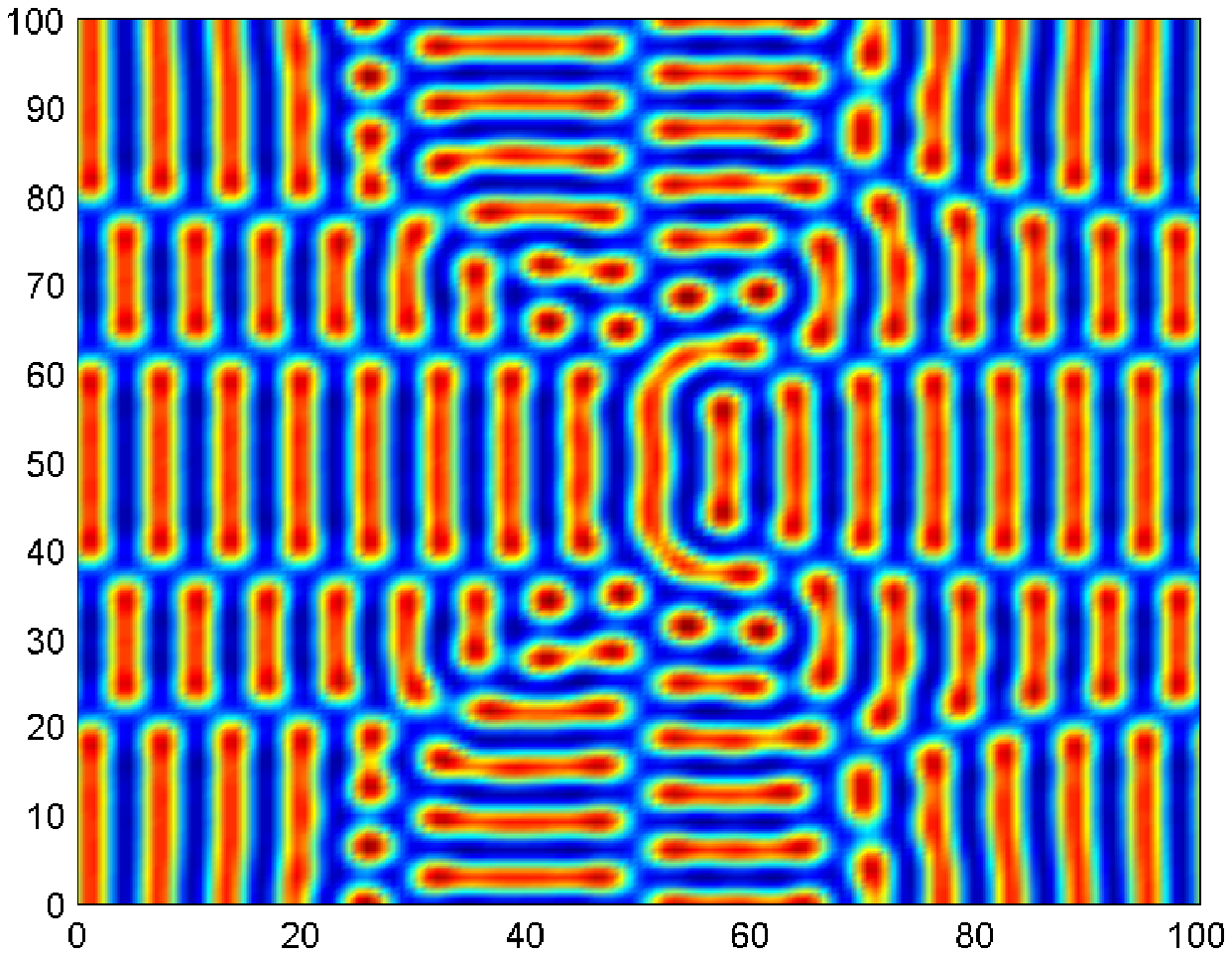}}
	\caption{Evolution of $u(x,y,t)$ at $t=0.1, 16, 32, 64$ with $\mathrm{g}=1$}\label{snapshot-figureg0}
\end{figure}

\begin{figure}[htb!]
	\centering
	\subfigure[t=0.1]{
		\includegraphics[width=1.2in]{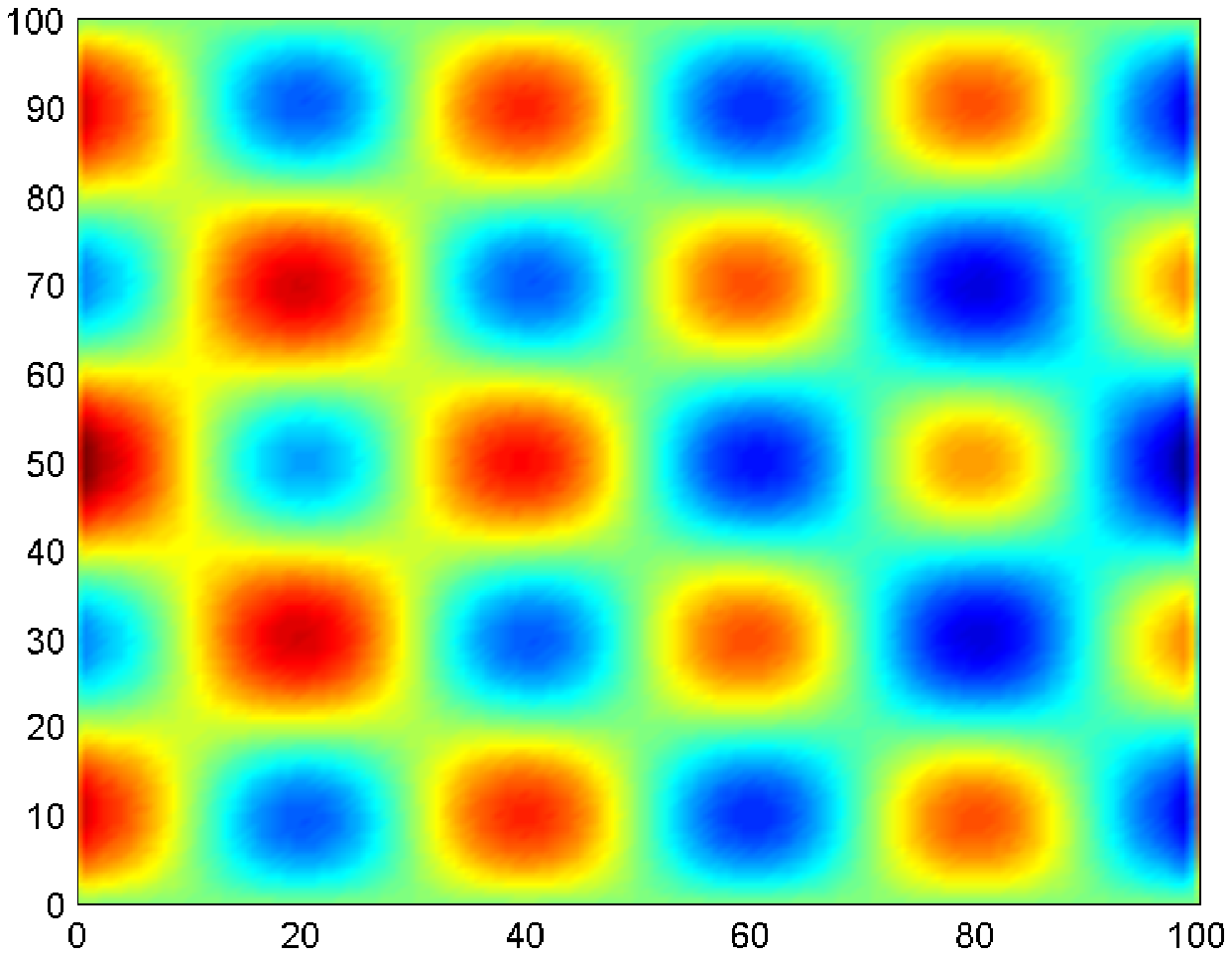}}
	\subfigure[t=16]{
		\includegraphics[width=1.2in]{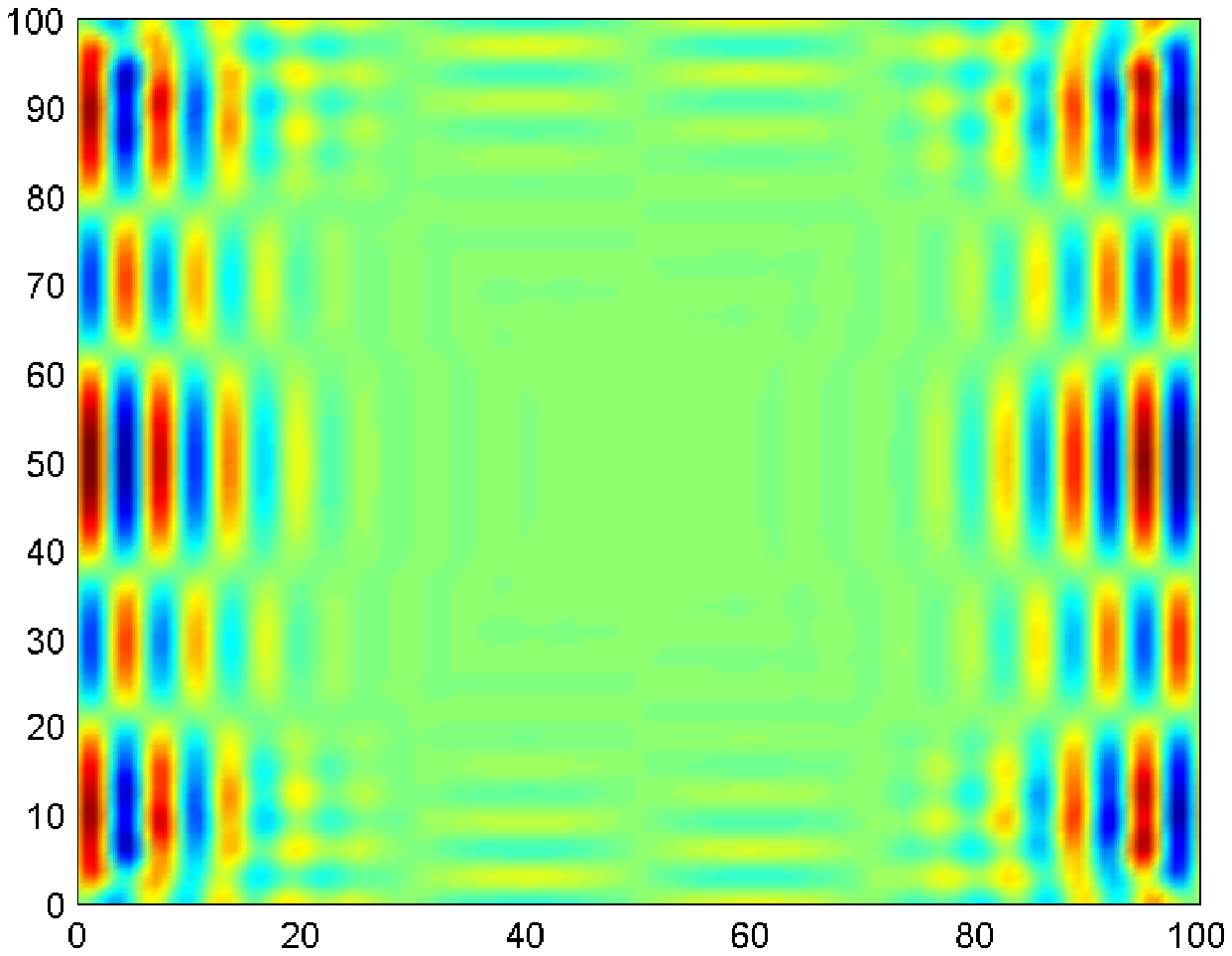}}
	\subfigure[t=32]{
		\includegraphics[width=1.2in]{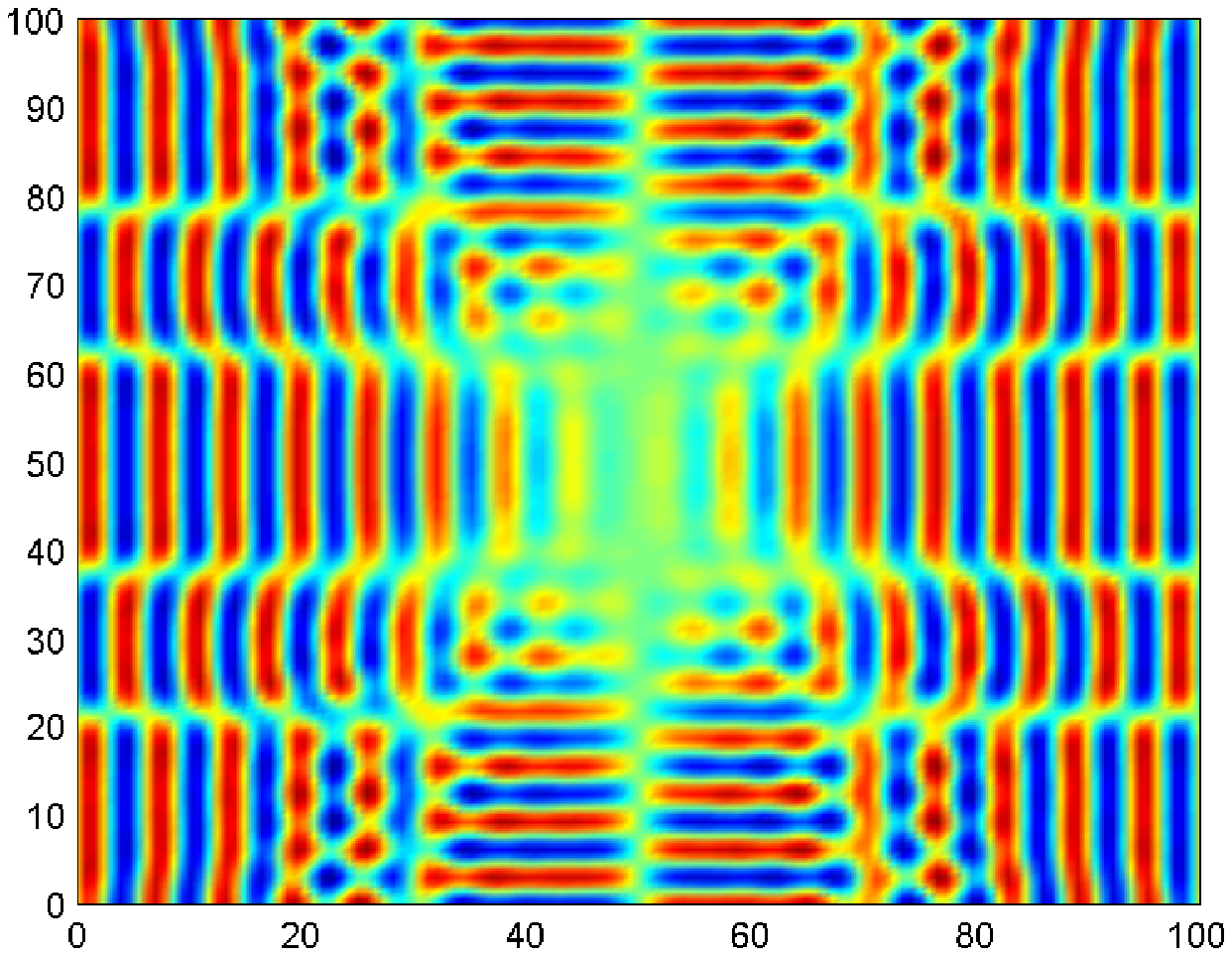}}
	\subfigure[t=64]{
		\includegraphics[width=1.2in]{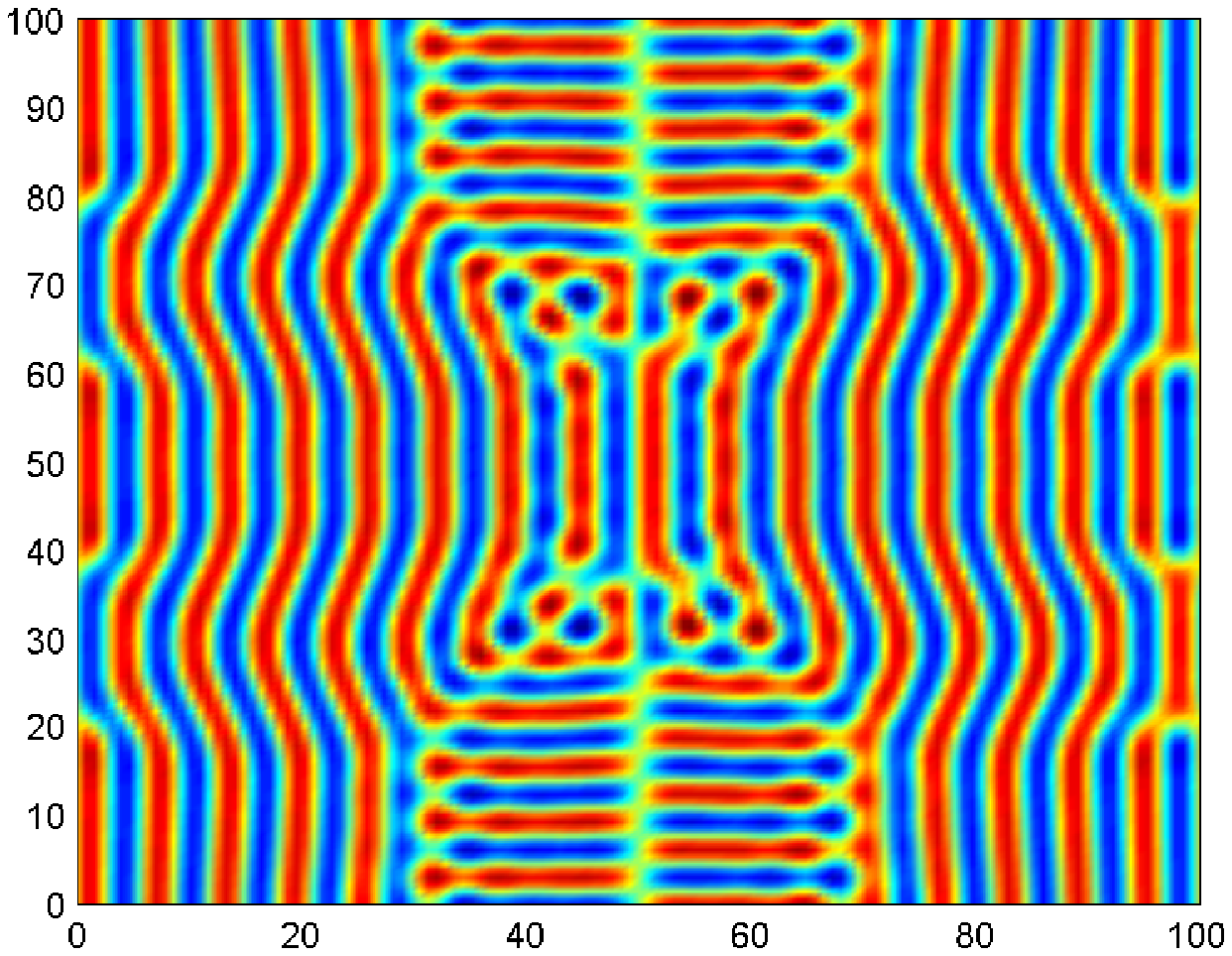}}
	\caption{Evolution of $u(x,y,t)$ at $t=0.1, 16, 32, 64$ with $\mathrm{g}=0$}\label{snapshot-figureg1}
\end{figure}

\begin{example}{(Evolution of energy and solution in 3D)}
	In this example, in order to further test the evolutionary properties of the BDF3 scheme \eqref{eq: fully BDF3 implicit scheme1},	we consider the Swift-Hohenberg equation \eqref{def:the SH equation} on three-dimensional space $\Omega=(0,48)^3$ with  $\epsilon=0.1,~\mathrm{g}=0$, subjected to the periodic boundary conditions and random initial condition
	$$u(x,y,z,0)=0.01*rand(x,y,z),~~~~~(x,y,z)\in \Omega,$$
	where rand(x,y,z) is a random number between $-0.01$ and $0.01$.
\end{example}

For fixed $h=1,$ $\tau=0.01,$ $\epsilon=0.1,~\mathrm{g}=0,$ the numerical solutions are reported in Figure \ref{snapshot-figureg3}, while the snapshots from $t=0.01$ to $t=5000$ reveal vividly the formation and evolution of the curvilinear patterns. The pattern evolution looks slow in the beginning, however, we observe that at a certain point, before $t=100$ in this case, the states of circular aggregation break up giving way to the curvilinear patterns. A stable curvilinear pattern is taking its shape after $t\ge 5000$, and the steady state is approached. The energy evolution in Figure \ref{figure-energy_11} clearly confirms this.

\begin{figure}[htb!]
	\centering
	\subfigure[t=0.01]{
		\includegraphics[width=2in]{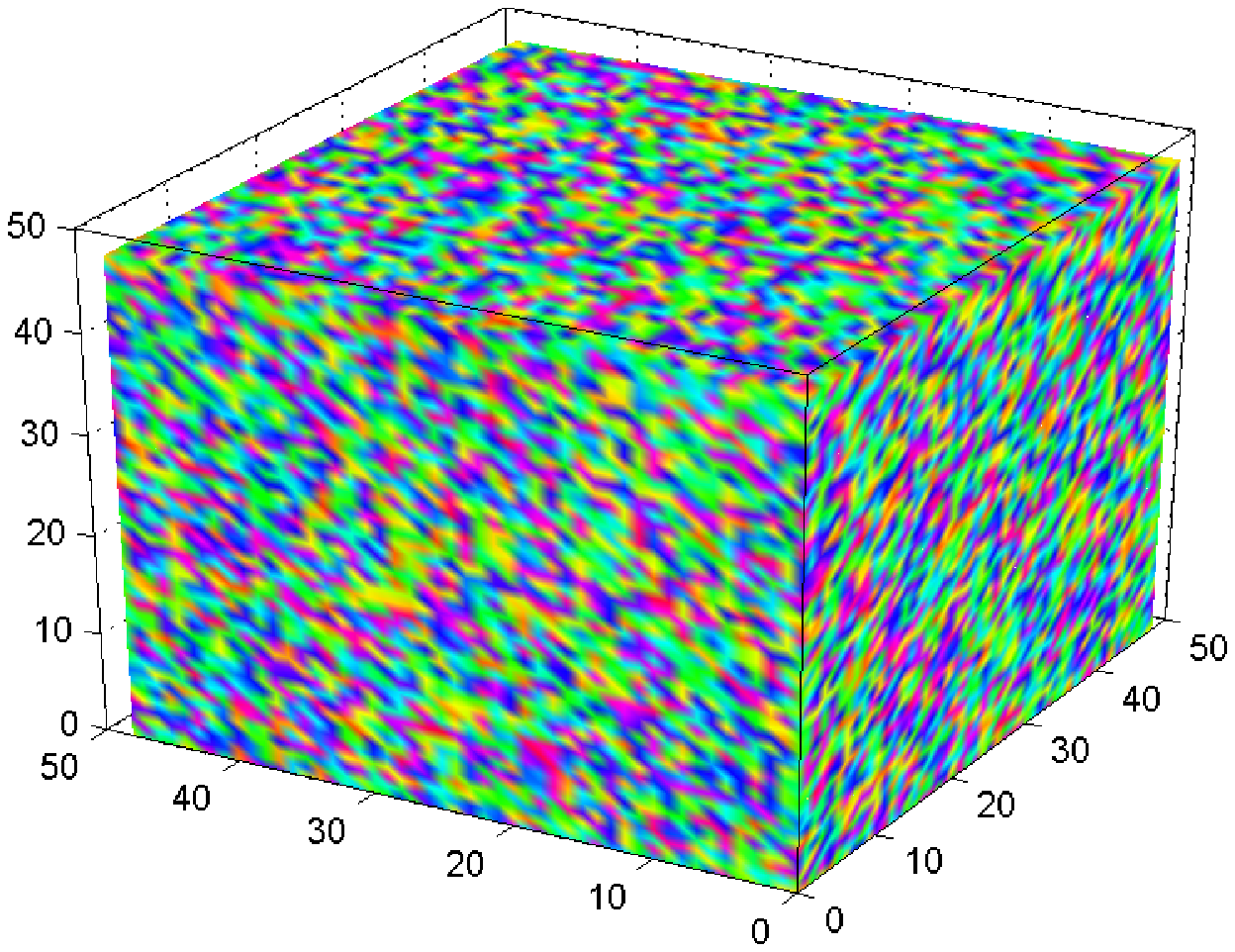}}
	\subfigure[t=10]{
		\includegraphics[width=2in]{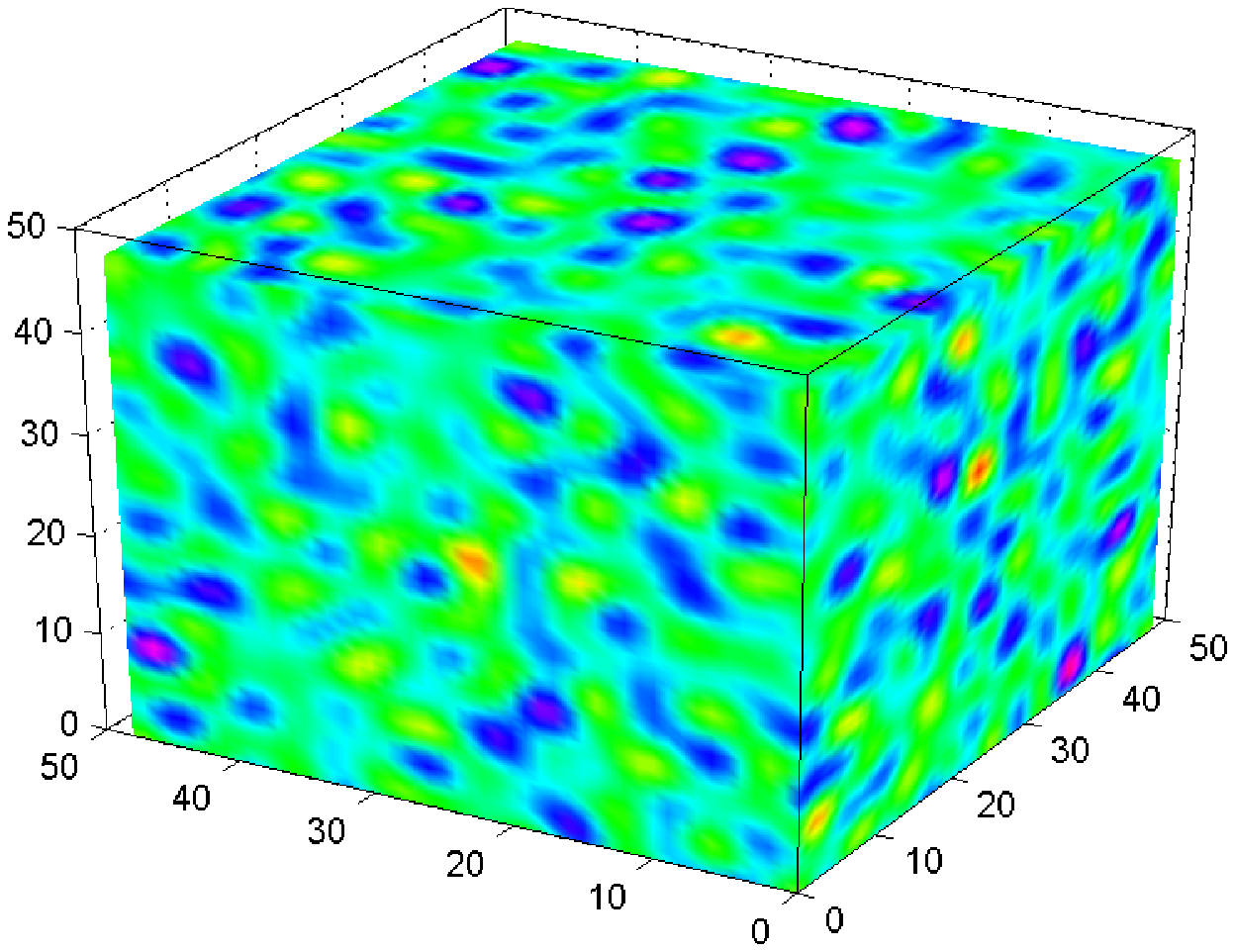}}
	\subfigure[t=100]{
		\includegraphics[width=2in]{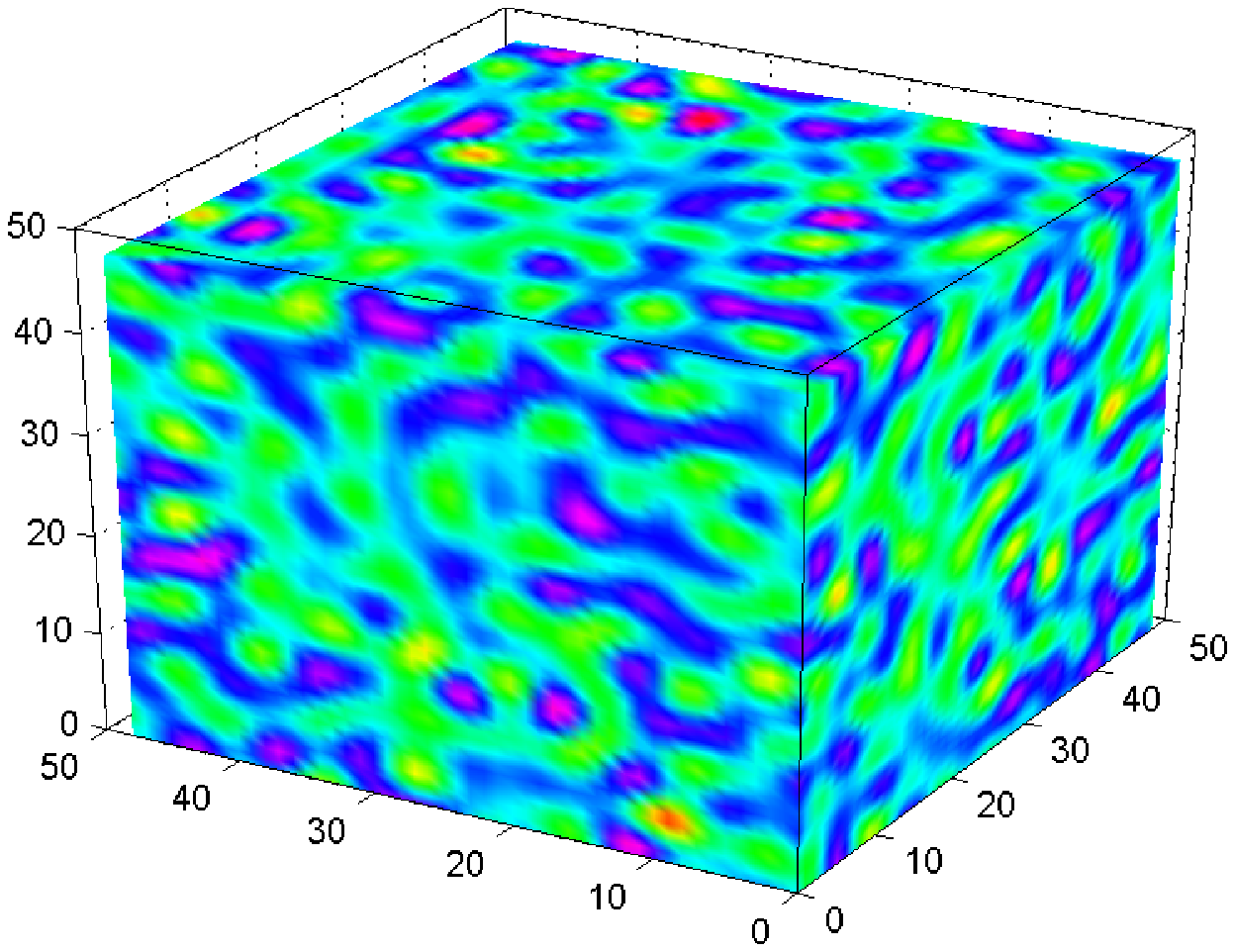}}\\
	\subfigure[t=500]{
		\includegraphics[width=2in]{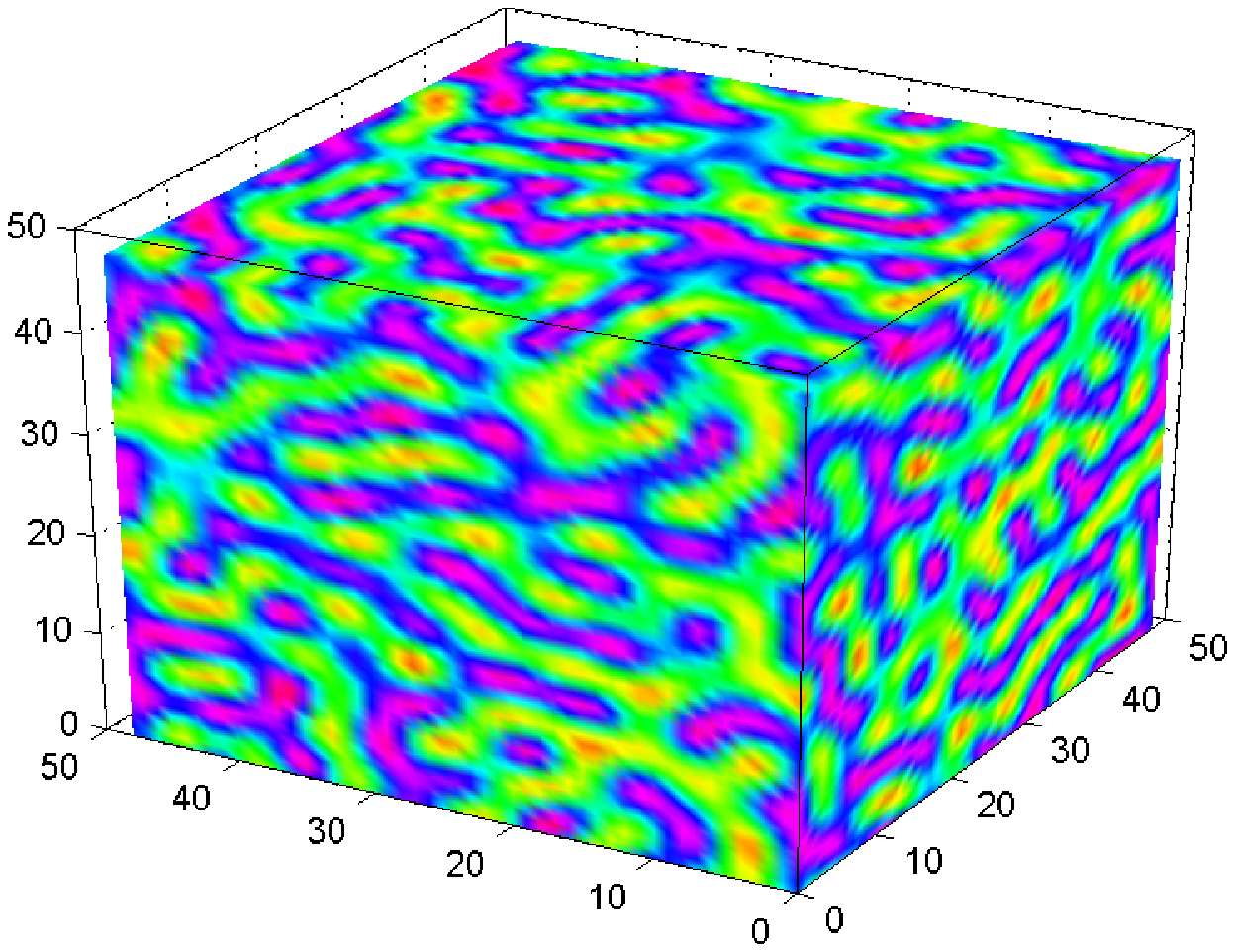}}
	\subfigure[t=2000]{
		\includegraphics[width=2in]{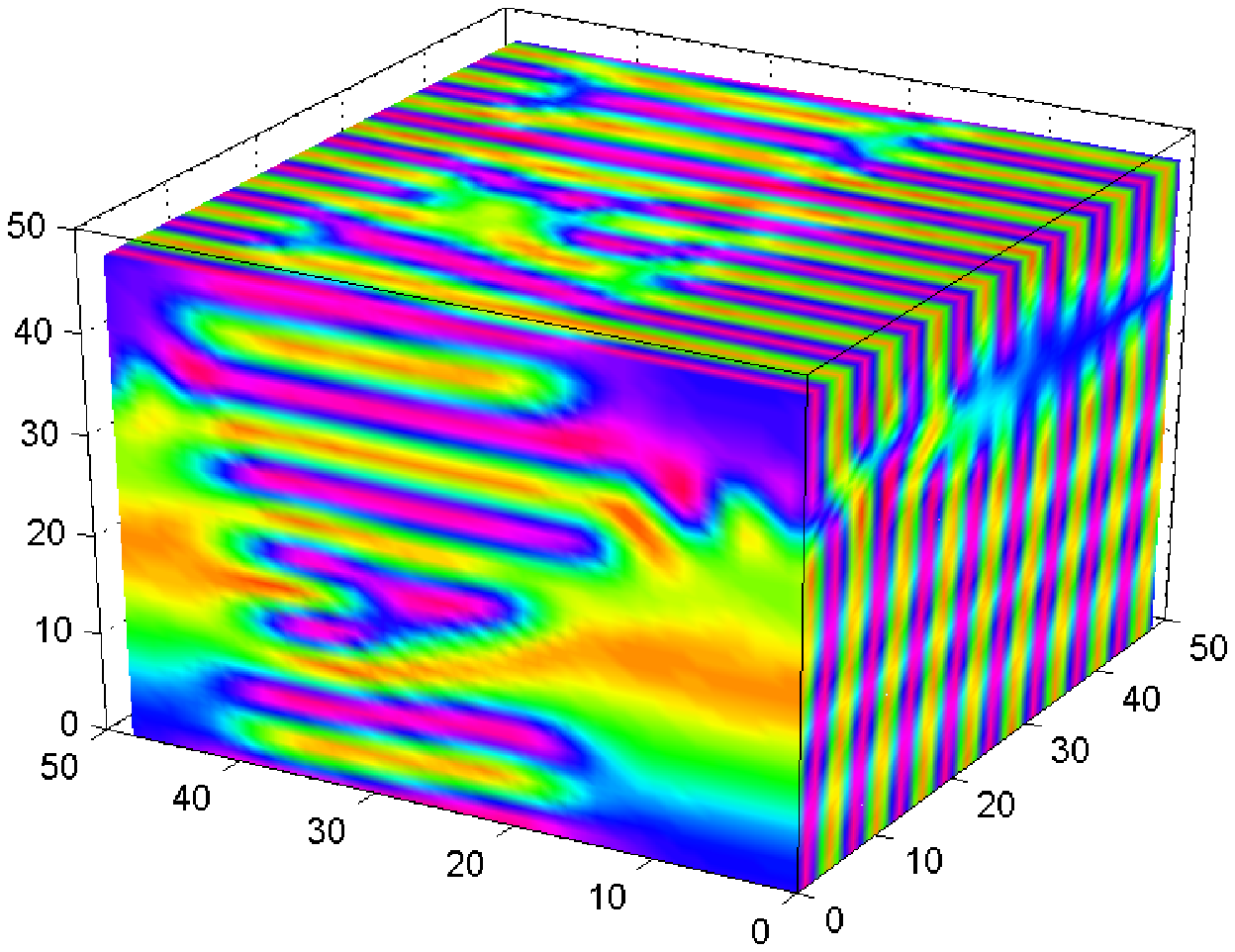}}
	\subfigure[t=5000]{
		\includegraphics[width=2in]{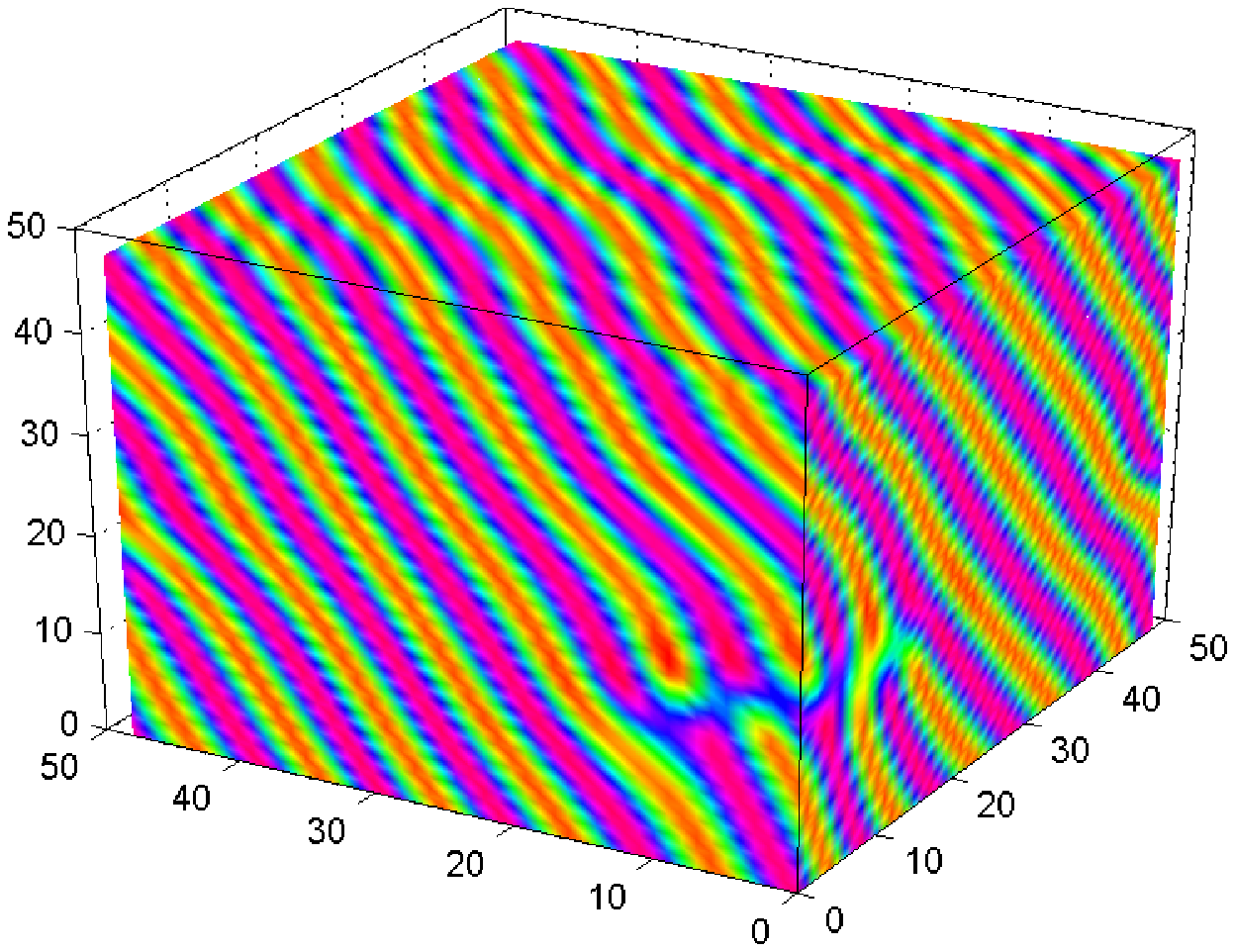}}

	\caption{Evolution of $u(x,y,t)$ at $t=0.1, 10, 100,500,2000,5000$ with $\mathrm{g}=0$}\label{snapshot-figureg3}
\end{figure}

\begin{figure} [htb!]
	\centering
	\includegraphics[width=3.0in]{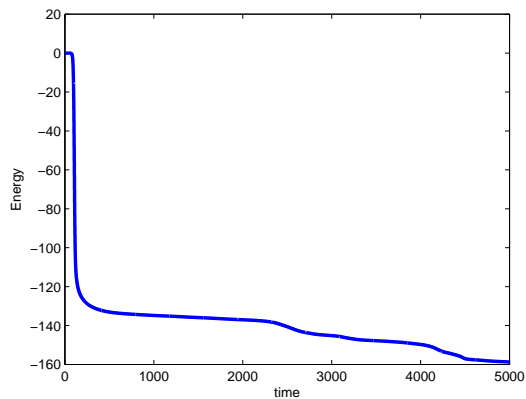}
	\caption{Evolution of energy using the BDF3 scheme (\ref{eq: fully BDF3 implicit scheme1}).}\label{figure-energy_11}
\end{figure}

\section{Concluding remarks}
In this paper, we presented and analyzed an energy stable, uniquely solvable, and convergent numerical scheme for the Swift-Hohenberg equation. 
The energy stability and unique solvability of the fully discrete scheme were derived using the Brouwer fixed-point theorem and some analytical techniques. 
The $L^2$ norm convergence was then proved utilizing the DOC kernel. Numerical results showed the convergence order and dissipative properties of the proposed BDF3 scheme in 2D and 3D simulations. The analysis framework developed in this work can be further extended to some other problems with gradient flow structure.

\section*{Acknowledgement}
\noindent We would like to acknowledge support by the National Natural Science Foundation of China (No. 11701081,11861060), the Fundamental Research Funds for the Central Universities, the Jiangsu Provincial Key Laboratory of Networked Collective Intelligence (No. BM2017002), Key Project of Natural Science Foundation of China (No. 61833005) and ZhiShan Youth Scholar Program of SEU, China Postdoctoral Science Foundation (No. 2019M651634), High-level Scientific Research foundation for the introduction of talent of Nanjing Institute of Technology (No. YKL201856).

\end{document}